\documentclass[a4paper,12pt]{article}

\usepackage{pstricks}
\usepackage{graphicx,psfrag}
\usepackage{amsmath}
\usepackage{amssymb}
\usepackage{amsthm}

      \newcommand {\al}   {\alpha}          
      \newcommand {\gam } {\gamma}          
      \newcommand {\del}  {\delta}          \newcommand {\Del} {\Delta}
              \newcommand {\ve}   {\varepsilon}
                 \newcommand {\vphi} {\varphi}
      \newcommand {\lam}  {\lambda}         
                \newcommand {\Up}   {\Upsilon}
                \newcommand {\Om}  {\Omega}
      \newcommand {\pl}   {\partial}        \newcommand {\s}    {\sigma}

      \newcommand {\HHH}    {\mathcal{H}}
            
      \newcommand {\RR}  {{\mathbb R}}           
      
      \newcommand {\OOO}  {{\cal O}}        \newcommand {\MMM}  {{\cal M}}
           
              \newcommand {\PPP}  {{\cal P}}
      \newcommand {\FFF}  {{\cal F}}        
           \newcommand {\NNN}  {\mathcal{N}}
       \newcommand {\ttt}  {\varsigma}

     \newcommand {\beq}  {\begin{equation}}
      \newcommand {\eeq}  {\end{equation}}  \newcommand {\lab}  {\label}
     \newcommand {\beqo}  {\begin{equation*}}
      \newcommand {\eeqo}  {\end{equation*}}

      \newtheorem{theorem}{Theorem}
      \newtheorem{lemma}{Lemma}
      
      \newtheorem{zam}{Remark}
      \newtheorem{opr}{Definition}
      \newtheorem{corollary}{Corollary}

\author{Alexander Plakhov\thanks{Center for R\&{}D in Mathematics and Applications, Department of Mathematics, University of Aveiro, Portugal and Institute for Information Transmission Problems, Moscow, Russia.}}

\title{Local properties of the surface measure\\ of convex bodies}

%\date{}

\begin{document}

\maketitle

\begin{abstract}
It is well known that any measure in $S^2$ satisfying certain simple conditions is the surface measure of a bounded convex body in $\RR^3$. It is also known that a local perturbation of the surface measure may lead to a nonlocal perturbation of the corresponding convex body. We prove that, under mild conditions on the convex body, there are families of perturbations of its surface measure forming line segments, with the original measure at the midpoint, corresponding to {\it local} perturbations of the body. Moreover, there is, in a sense, a huge amount of such families. We apply this result to Newton's problem of minimal resistance for convex bodies.
\end{abstract}

\begin{quote}
{\small {\bf Mathematics subject classifications:} 52A15, 52A40, 49Q10, 49Q20}

\end{quote}

\begin{quote} {\small {\bf Key words and phrases:}
Convex sets, Blaschke addition, Newton's problems of minimal resistance.}
\end{quote}

  %section{Introduction}

\section{Motivation of the study}\label{1.1}
The motivation for this study came from the famous Newton problem of minimal resistance. In modern terms and in a generalized form the problem can be formulated as follows.

Let $C_1 \subset C_2 \subset \RR^3$ be convex compact bodies. Denote by $\mathfrak{C}(C_1,C_2)$ the set of convex bodies $C$ satisfying $C_1 \subset C \subset C_2$. Let $f : S^2 \to \RR$ be a continuous function. The {\it generalized Newton problem} is as follows:
\beq\label{probl C}
\inf_{C \in \mathfrak{C}(C_1,C_2)} F(C), \qquad \text{where} \quad F(C) = \int_{\pl C} f(n_\xi)\, d\HHH^2(\xi).
\eeq
Here and in what follows, $\HHH^2$ designates the 2-dimensional Hausdorff measure in $\RR^3$, and $n_\xi$ is the outer normal to $C$ at a regular point $\xi \in \pl C$.

In this form the problem first appears in the paper by Buttazzo and Guasoni \cite{BG97}. In fact, they consider an even more general case when $f$ depends not only on $n_\xi$, but also on $\xi$, and investigate some other classes of convex bodies, along with $ \mathfrak{C}(C_1,C_2)$.

Of special interest is the particular case when $C_2$ is a right circular cylinder and $C_1$ is its rear base. In an appropriate orthogonal coordinate system with the coordinates $x,\, y,\, z$,\, $C_1$ and $C_2$ take the form: $C_1 = \Om \times \{ 0 \}$ and $C_2 = \Om \times [0,\, M]$, with $\Om = \{ (x,y) : x^2 + y^2 \le 1 \}$ and $M > 0$. Additionally, the function $f$ is taken to be $f(n) = \langle n,\, e_3 \rangle_+^3$, where $e_3 = (0,0,1)$ and $b_+$ is the positive part of $b \in \RR$. Here and in what follows, $\langle \cdot\,, \cdot \rangle$ means the scalar product.

In this particular case, a body $C \in \mathfrak{C}(C_1,C_2)$ is the subgraph of a concave function $u_C : \Om \to \RR$ satisfying $0 \le u_C(x,y) \le M$, namely, $C = \{ (x,y,z) : (x,y) \in \Om,\ 0 \le z \le u_C(x,y) \}$. Further, $\pl C$ is the union of the disc $\Om \times \{ 0 \}$ and the graph of $u_C$; for $\xi = (x,y,0) \in \Om \times \{ 0 \}$ one has $n_\xi = (0,0,-1)$ and $f(n_\xi) = 0$, and for $\xi = (x, y, u_C(x,y))$ one has
$n_\xi = (1 + |\nabla u_C(x,y)|^2)^{-1/2}\, ( -\frac{\pl u_C}{\pl x}(x,y),\, -\frac{\pl u_C}{\pl y}(x,y),\, 1 )$,\,
$f(n_\xi) = (1 + |\nabla u_C(x,y)|^2)^{-3/2}$, and $d\HHH^2(\xi) = (1 + |\nabla u_C(x,y)|^2)^{1/2}\, dx\, dy$. In view of this, Problem \eqref{probl C} can be written as follows: minimize the integral
\beq\label{probl u}
\int\!\!\!\int_\Om \frac{1}{1 + |\nabla u(x,y)|^2}\, dx\, dy
\eeq
in the class of concave functions $u : \Om \to \RR$ satisfying the condition $0 \le u(x,y) \le M$. In what follows, Problem \eqref{probl u} will be called the {\it particular Newton problem}.

The particular problem was first stated by Buttazzo and Kawohl in \cite{BK} and has been intensively studied since then; see, e.g., \cite{BFK,BrFK,CL1,CL2,LP1,LP2,LO}. However, it has not been completely solved yet. It is proved that the solution $u$ exists \cite{BFK} and has the following properties: first, if $\nabla u$ exists at a certain point $(x,y) \in \Om$ then either $|\nabla u(x,y)|  = 0$ or $|\nabla u(x,y)|  \ge 1$ \cite{BFK}; second, if $u$ is in the class $C^2$ in a neighborhood of $(x,y)$ and $0 < u(x,y) < M$ then the matrix of the second derivative $D^2 u(x,y)$ has a zero eigenvalue \cite{BrFK}. Properties of the solution were numerically examined in \cite{LO,W}. Note that problem \eqref{probl u} in the subclass of concave radially symmetric functions was first considered by I. Newton in his {\it Principia} (1687).

\section{The surface measure of convex bodies}\label{1.2}

Let $C \subset \RR^3$ be a convex compact body. By $|A|$ or $\HHH^2(A)$ we denote the 2-dimensional Hausdorff measure of a Borel set $A \subset \pl C$.

Define the map $\NNN_C : \pl'C \to S^2$ by $\NNN_C(\xi) = n_\xi$, where $\pl'C$ denotes the (full-measure) subset of regular points of $\pl C$. The pushforward measure $\nu_C := \NNN_C\# \HHH^2$ defined in $S^2$ is called the {\it surface measure of $C$}. Thus, for a Borel set $A \subset S^2$ we have $\nu_C(A) = |\NNN_C^{-1}(A)|$.

The surface measure $\nu = \nu_C$ of a convex compact body $C$ satisfies the following conditions.

(a) $\int_{S^2} n\, d\nu(n) = \vec 0$.

(b) $\nu(S^2 \setminus \Pi) > 0$ for any 2-dimensional subspace $\Pi$ of $\RR^3$.\\
By Alexandrov's theorem \cite{Alex}, conversely, if a measure $\nu$ defined in $S^2$ satisfies (a) and (b) then there exists a unique (up to a translation) convex compact body $C$ with the surface measure equal to $\nu$, that is, $\nu_C = \nu$.

The notion of surface measure allows one to define the sum of two convex compact bodies (Blaschke sum); namely, the sum of $A$  and $B$ is (up to a translation) the convex compact body $A \# B$ such that $\nu_{A\# B} = \nu_A + \nu_B$.

In terms of surface measure, Problem \eqref{probl C} can be rewritten as a  minimization problem for a linear functional on a space of measures,
\beq\label{probl linear}
\inf_{\nu \in \Up(C_1,C_2)} \FFF(\nu), \qquad \text{where} \quad \FFF(\nu) = \int_{S^2} f(n)\, d\nu(n),
\eeq
where $\Up(C_1,C_2)$ designates the set of surface measures $\nu_C$ with $C_1 \subset C \subset C_2$; that is, $\Up(C_1,C_2) := \{ \nu_C : C_1 \subset C \subset C_2 \}$.

  %Conversely, each continuous linear functional on the space of measures on the sphere in the $*$-weak topology is a generalized Newton's functional.

The fact that Newton's problem can be formulated as a linear problem in terms of surface measure was first noticed by Carlier and Lachand-Robert in \cite{CLR03}. They used the linear representation to study the generalized Newton problem (i) in the class of convex bodies with fixed surface area and (ii) in the more restricted class of bodies with fixed surface area and, additionally, with the fixed area of the projection of the body on a certain plane $e_1 x + e_2 y + e_3 z = 0$.

These classes admit an easy and natural representation in terms of the surface measure. Indeed, the class of surface measures corresponding to (i) contains all measures $\nu$ satisfying (a) and (b) with fixed full measure $\nu(S^2)$, while  the more restricted class of surface measures corresponding to (ii) contains the measures satisfying the additional condition that the measure of the hemisphere $\{ n \in S^2 : \langle n,\, e \rangle \ge 0 \}$ is fixed.

 On the contrary, the main difficulty of Problem \eqref{probl linear} resides in the extreme complexity of the set of measures $\Up(C_1,C_2)$. Nevertheless, we believe that studying this set could trigger further progress in Newton's problem and additionally, extend our knowledge about the surface measure and Blaschke addition.

Here are some intriguing questions concerning $\Up(C_1,C_2)$, with reasonable sets $C_1$ and $C_2$ (for example, a cylinder and one of its bases, or two concentric balls).

$\bullet$ Characterize the convex hull, Conv$\Up(C_1,C_2)$, of the set $\Up(C_1,C_2)$ and the class of convex bodies with the surface measure contained in Conv$\Up(C_1,C_2)$.

$\bullet$ Characterize the set $\Up(C_1,C_2) \cap \pl\, (\text{Conv} \Up(C_1,C_2))$. Each measure contained in it is a solution to a generalized Newton problem \eqref{probl linear}.

$\bullet$ Characterize the extreme points of Conv$\Up(C_1,C_2)$. Any such point is a unique solution to a generalized Newton problem \eqref{probl linear}.

It would be interesting to develop a computer algorithm of solving Problem \eqref{probl linear}. In this connection we mention the papers \cite{Kuta,Z}, where numerical algorithms for representation of Blaschke sum of convex polyhedra are proposed.

\section{Main results}\label{1.3}

Here we state the main results of the paper concerning local properties of surface measures, and deduce from them some properties of a solution to a generalized Newton problem.

Let $C$ be a convex compact body in $\RR^3$. Take a point $O$ outside $C$ and consider the closed cone $K$ with the vertex at $O$ circumscribed about $C$; in other words, $K$ is the union of all rays intersecting $C$ with the origin at $O$. Clearly, $K$ is convex. Denote by $\pl_- C$ the part of the boundary of $C$ contained in the interior of the convex hull of $C \cup \{O\}$,
$$
\pl_- C := \pl C \cap \text{\,int}\,(\text{Conv}(C \cup \{O\})),
$$
and let $\pl_+ C$ be the complementary part of the boundary,
$$
\pl_+ C := \pl C \setminus \pl_- C = \pl C \cap \pl (\text{Conv}(C \cup \{O\}).
$$
Draw a line $l$ through $O$ intersecting the interior of $C$. Let $B$ and $B'$ be the points of intersection of $l$ with $\pl C$, where the semiopen segment $[O,\, B)$ lies outside $C$, and the closed segment $[B,\, B']$ is contained in $C$; see Fig.~\ref{fig body}. The  intersection of a half-plane bounded by $l$ with $\pl C$ is a planar convex curve with the endpoints $B$ and $B'$ (the curve $BMB'$ in Fig.~\ref{fig body}), and the intersection of this half-plane with $\pl K$ is the ray of support to this curve with the origin at $O$ (the ray $OM$ in Fig.~\ref{fig body}).

   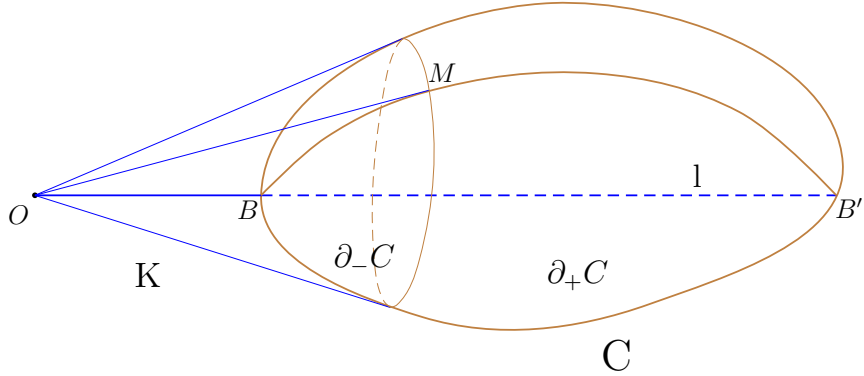
\begin{figure}[h]
\begin{picture}(0,160)
\scalebox{0.85}{
\rput(4,0.25){
\psecurve[linecolor=brown](0.5,2)(3,0.1)(6.5,0.3)(9.5,2.5)(5,5)(0.5,2)(3,0.1)(6.5,0.3)
        \psdots[dotsize=2.5pt](-3,2)   %(0.5,2)(-0.28,2)  (1.22,3.5) (2.7,4.45)
    %(2.6,4.33)(2.4,3.64)   (2.25,1)(2.4,0.35)(2.5,0.25)
\psline[linecolor=blue](-3,2)(0.5,2)
\psline[linecolor=blue,linestyle=dashed](0.5,2)(9.4,2)
\pscurve[linecolor=brown](0.5,2)(1.5,2.9)(3,3.6)(5.7,3.9)(7.9,3.3)(9.4,2)
                 \rput(3.3,3.92){$M$}
\psline[linecolor=blue,linewidth=0.4pt](-3,2)(2.7,4.45)
\psline[linecolor=blue,linewidth=0.4pt](-3,2)(3.1,3.64)
       \psline[linecolor=blue,linewidth=0.4pt](-3,2)(2.5,0.25)
\pscurve[linecolor=brown,linewidth=0.4pt](2.7,4.45)(2.8,4.4)(3.1,3.64)
(2.95,1)(2.6,0.29)(2.5,0.25)
\pscurve[linecolor=brown,linewidth=0.4pt,linestyle=dashed](2.7,4.45)(2.6,4.33)(2.4,3.64)
   (2.25,1)(2.4,0.35)(2.5,0.25)
\rput(-3.25,1.7){$O$}
\rput(9.6,1.8){$B'$}
\rput(0.3,1.77){$B$}
       \rput(2.1,1){\scalebox{1.2}{$\pl_- C$}}
       \rput(5.4,0.75){\scalebox{1.2}{$\pl_+ C$}}
       \rput(7.25,2.3){$\scalebox{1.2}{l}$}
       \rput(-1.25,0.75){$\scalebox{1.3}{K}$}
              \rput(6,-0.5){$\scalebox{1.6}{C}$}
}}
\end{picture}
\caption{The convex body $C$ and the circumscribed cone $K$.}
\label{fig body}
\end{figure} 

\begin{theorem}\label{t0}
Assume that

(i) in each half-plane, the corresponding ray of support touches the curve;

(ii) there exists $k > 0$ such that in any half-plane, the part of the curve between $B$ and the point of tangency is of class $C^1$ with the curvature $\le k$.

Then there exists a family of convex bodies $C(s),\ s \in [-1,\, 1]$ such that

(a) $C(0) = C$;

(b) $\pl_+ C \subset \pl C(s)$;

(c) the corresponding family of surface measures $\nu_{C(s)}$ is a line segment with the midpoint at $\nu_C$; that is, for all $s \in [-1,\, 1]$
$$
\nu_{C(s)} = \nu_{C} + s\, (\nu_{C(1)} - \nu_C).
$$
In particular, $\nu_C = \frac{1}{2} \big( \nu_{C(-1)} + \nu_{C(1)} \big)$, and the signed measure $\nu_{C(1)} - \nu_{C}$ is a director vector of the segment.

(d) Moreover, the aforementioned family of bodies is not unique; there exists a set of families of bodies $C^{(\theta)}(s)$ depending on the parameter $\theta \in \Theta$, with each family satisfying (a), (b), (c), such that the union of 1-dimensional subspaces spanned by the corresponding director vectors, $\cup_{\theta \in \Theta} \{ \lam(\nu_{C^{(\theta)}(1)} - \nu_C) : \lam \in \RR \}$, is an infinite-dimensional subspace of the space of signed measures.
\end{theorem}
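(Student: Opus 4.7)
My plan is to reduce the theorem to a question about signed measures on $S^2$ and then construct the perturbation locally. By Alexandrov's theorem recalled in Section~2, parts (a) and (c) of the conclusion will follow once one exhibits a signed measure $\mu$ on $S^2$ with $\int n\,d\mu(n) = \vec 0$ and $\nu_C \pm \mu \ge 0$: for each $s \in [-1,1]$ the measure $\nu(s) := \nu_C + s\mu = \frac{1+s}{2}(\nu_C+\mu) + \frac{1-s}{2}(\nu_C-\mu)$ is a convex combination of positive measures satisfying conditions (a), (b) of Section~2, hence equals $\nu_{C(s)}$ for a unique (up to translation) convex body $C(s)$, with $C(0)=C$, and the $\nu_{C(s)}$ automatically form a line segment. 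The nontrivial task is to arrange that the resulting bodies $C(s)$ also satisfy $\partial_+ C \subset \partial C(s)$, i.e.\ conclusion (b), since Alexandrov supplies no geometric information relating $C(s)$ to $C$.

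To produce $\mu$, I would build $C(\pm 1)$ by an explicit local perturbation of $C$. Fix a point $p$ in the interior of $\partial_- C$, away from the separating curve $\Gamma$ (the locus of tangency points $M$ as the half-plane bounded by $l$ rotates); hypothesis (i) guarantees that $\Gamma$ is cleanly separated from $\partial_- C$ so such a $p$ exists. By (ii), $\partial_- C$ is $C^1$ with curvature bounded by $k$ near $p$, so there is a small open ball $U \subset \RR^3$ around $p$ whose closure is disjoint from both $\Gamma$ and $\partial_+ C$, and such that $\partial_- C \cap U$ is a smooth convex cap. The aim is to exhibit two convex bodies $C^+, C^-$ that agree with $C$ outside $U$ (so in particular $\partial_+ C \subset \partial C^{\pm}$), whose pieces inside $U$ are alternative convex caps $K^\pm$ matching $C$ on $\partial U$, and satisfying $\nu_{C^+} + \nu_{C^-} = 2\nu_C$ as measures on $S^2$. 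The signed measure $\mu := \nu_{C^+} - \nu_C$ is then supported in the small spherical window $A$ consisting of the normals to $\partial_- C \cap U$ and to $\partial C^\pm \cap U$; it automatically satisfies $\int n\, d\mu = 0$ (both $\nu_{C^\pm}$ already do) and $\nu_C \pm \mu \ge 0$. A concrete recipe for $K^\pm$: using the near-flatness of $\partial_- C \cap U$ (quantified by (ii)), inscribe a small convex polytope and rearrange a few of its facets into a Blaschke-symmetric pair; the $C^1$ control ensures that the rearrangement actually produces valid convex caps attaching to $C$ along $\partial U$.

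The main obstacle is extending conclusion (b) from $s=\pm 1$ to all intermediate $s \in (-1,1)$. The abstract Alexandrov body $C(s)$ with $\nu_{C(s)} = \nu_C+s\mu$ need not \emph{a priori} agree with $C$ outside $U$, even though $\nu_{C(s)} \equiv \nu_C$ on $S^2 \setminus A$. I would circumvent this by constructing $C(s)$ directly rather than abstractly: define $K(s)$ as the convex cap inside $U$ whose surface measure equals $\frac{1+s}{2}\nu_{K^+} + \frac{1-s}{2}\nu_{K^-}$ and which matches $C$ on $\partial U$ (the existence of this cap is a local Alexandrov-type problem with prescribed boundary, whose solvability for small perturbations is precisely what (ii) buys us), and set $C(s) := (C \setminus U) \cup K(s)$. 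By construction $C(s)$ is a convex body with $\partial_+ C \subset \partial C(s)$ and $\nu_{C(s)} = \nu_C + s\mu$; the uniqueness part of Alexandrov then identifies this explicit $C(s)$ with the abstract solution. This boundary-value Alexandrov step is, in my view, the technical heart of the proof; the role of (i) is to guarantee that a nontrivial $\partial_- C$ exists to perturb in, and the role of (ii) is to furnish the quantitative control needed to solve the local problem uniformly in $s$.

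Finally, conclusion (d) is a counting argument once a single family is in hand. Pick a sequence of pairwise disjoint small balls $U_j$ centered at interior points $p_j$ of $\partial_- C$, and perform the construction in each to get $\mu_j$ supported in a small spherical window $A_j$. By choosing the base points $p_j$ sufficiently spread out on $\partial_- C$, one can arrange the $A_j$'s to be pairwise disjoint as well; signed measures with pairwise disjoint supports are linearly independent, so the span of $\{\mu_j\}_{j \ge 1}$ is an infinite-dimensional subspace of signed measures on $S^2$, providing the families $C^{(\theta)}(s)$ indexed by $\theta = j$ required for (d).
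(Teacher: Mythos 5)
Your construction rests on a regularity premise that the hypotheses of the theorem do not supply. Condition (ii) only says that the \emph{planar sections} of $\partial_- C$ by half-planes through the line $l$ are $C^1$ with curvature bounded \emph{above}; it gives no smoothness of $\partial_- C$ as a surface, no strict convexity, and no lower curvature bound. Yet your recipe needs exactly that: you assert that near a generic interior point $p$ of $\partial_- C$ the boundary is "a smooth convex cap," and the feasibility of subtracting a signed measure ($\nu_C - \mu \ge 0$ on the small window $A$) implicitly needs the restriction of $\nu_C$ to $A$ to have mass spread with a density bounded below. The body of Remark~3 (Fig.~3) satisfies (i) and (ii) but has zero Gaussian curvature at every regular point, with ruled pieces and edges inside $\partial_- C$; there the surface measure near such points is concentrated on lower-dimensional subsets of $S^2$, so the "room to perturb" you rely on simply is not there, and the recipe "inscribe a polytope and rearrange a few facets into a Blaschke-symmetric pair" has no evident meaning. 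Even in the smooth strictly convex case you never verify the exact antisymmetry $\nu_{C^-}=2\nu_C-\nu_{C^+}$ for caps pinned to $C$ along $\partial U$ (note that the naive truncation by a plane fails: it creates an atom that $2\nu_C-\nu_{C^-}$ cannot absorb).

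The second gap is the step you yourself call the technical heart: the existence, for each $s$, of a convex cap $K(s)$ inside $U$ matching $C$ along $\partial U$ with prescribed surface-measure contribution. This is a Monge--Amp\`ere-type boundary value problem; it is not a quotable consequence of Alexandrov's theorem (which, as you note, controls nothing outside the perturbed window), and nothing in (ii) furnishes its solvability, let alone uniformly in $s$. Since conclusion (b) for all intermediate $s$ hinges entirely on this unproved step, the argument is incomplete. The paper's route avoids local boundary-value problems altogether: it perturbs all of $\partial_- C$ at once by translating the one-parameter family of cones circumscribed about $C$ with vertices $B_t$ on the segment $OB$, the translation being governed by a functional parameter $\eta(t)=\sqrt{1+s\,\theta(t)}$; an explicit integral formula for $\nu_C$ in the cone coordinates $(t,\varphi)$, with generating function $-r^2(t,\varphi)$, makes the dependence on $s$ manifestly affine (this is where (ii) enters, via a Lipschitz-type bound on $t\mapsto r(t,\varphi)$), condition (i) is what guarantees that the supporting ray from $O$ remains tangent to the perturbed body so that $\partial_+C$ is preserved, and the freedom in $\theta$ yields (d). Your reading of (i) as merely ensuring that $\partial_- C$ is nonempty misses this role, which is essential for (b).
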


\begin{zam}\label{z curvature}
By saying that the curvature of a $C^1$ curve is $\le k$ we mean that, given a natural parametrization $\gam(\ttt)$ of the curve, the angle between any two unit vectors $\gam'(\ttt_1)$ and $\gam'(\ttt_2)$ does not exceed $k|\ttt_2 - \ttt_1|$.   %, that is, $\langle \gam'(\ttt_1),\, \gam'(\ttt_2) \rangle \ge \cos(k(\ttt_2 - \ttt_1))$.
\end{zam}

The proof of Theorem \ref{t0} will be given in Section \ref{sec proof}.

   %\begin{zam}\label{z theorem}
Note that a local perturbation of the surface $\pl C$ of a convex body $C$ leads to a local perturbation of its surface measure $\nu_C$. The converse, however, is not true: it is well known that a local perturbation of the surface measure may lead to a nonlocal perturbation of the body surface. In other words, if we have a domain $\Om \subset S^2$ and two surface measures $\nu_{C_1}$ and $\nu_{C_2}$ such that $\nu_{C_1}\rfloor_\Om = \nu_{C_2}\rfloor_\Om$, it does not follow that $\NNN_{C_1}^{-1}(\Om)$ and $\NNN_{C_2}^{-1}(\Om)$ coincide up to a translation.

 Indeed, let $e_1 = (1,0,0)$,\, $e_2 = (0,1,0)$,\, $e_3 = (0,0,1)$, take $\al \ne 0$ and denote $v^+ = {(e_3 + \al e_1)}/{\sqrt{1 + \al^2}}$,\, $v^- = {(e_3 - \al e_1)}/{\sqrt{1 + \al^2}}$. One obviously has $e_3 = \sqrt{1 + \al^2}\, (v^+ + v^-)/2$. The measures
 $$
 \nu_{C_1} = \del_{e_1} + \del_{-e_1} + \del_{e_2} + \del_{-e_2} + \del_{e_3} + \del_{-e_3}
 $$
 and
 $$
 \nu_{C_2} = \del_{e_1} + \del_{-e_1} + \del_{e_2} + \del_{-e_2} +  \frac{\sqrt{1 + \al^2}}{2}\, (\del_{v^+} + \del{v^-}) + \del_{-e_3}
 $$
 are surface measures of a cube and a body called "house"; see Fig.~\ref{fig cube and house}.  Let $\Om \subset S^2$ be any domain that does not include $e_3,\, v^+$, and $v^-$. One has $\nu_{C_1}\rfloor_\Om = \nu_{C_2}\rfloor_\Om$; however, the sets $\NNN_{C_1}^{-1}(\Om)$ and $\NNN_{C_2}^{-1}(\Om)$ are not translations of each other. Indeed, $\NNN_{C_1}^{-1}(\Om)$ is the union of the lateral and the lower faces of the cube, whereas $\NNN_{C_2}^{-1}(\Om)$ is the union of the "walls" and the "floor" of the "house".

  \begin{figure}[h]
\begin{picture}(0,90)
\scalebox{0.8}{
\rput(1.75,0){
\pspolygon(0,0)(3,0)(3,3)(0,3)
\psline(3,0)(4.5,0.5)(4.5,3.5)(1.5,3.5)(0,3)
\psline(3,3)(4.5,3.5)
\psline[linewidth=0.3pt,linestyle=dashed](0,0)(1.5,0.5)(1.5,3.5)
\psline[linewidth=0.3pt,linestyle=dashed](1.5,0.5)(4.5,0.5)
}
\rput(9.25,0){
\pspolygon(0,0)(3.5,0)(3.5,2.5)(0,2.5)
\psline(3.5,0)(5,0.5)(5,3)(4,3.5)(0.5,3.5)(0,2.5)
\psline(3.5,2.5)(4,3.5)
\psline[linewidth=0.3pt,linestyle=dashed](0,0)(1.5,0.5)(1.5,3)(0.5,3.5)
\psline[linewidth=0.3pt,linestyle=dashed](1.5,0.5)(5,0.5)
\psline[linewidth=0.3pt,linestyle=dashed](1.5,3)(5,3)
}
}
\end{picture}
\caption{A cube and a "house".}
\label{fig cube and house}
\end{figure}
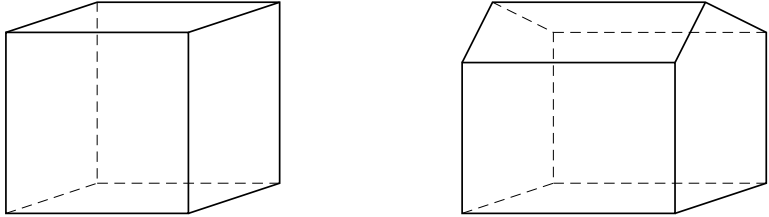 

It is important therefore to find out conditions which would guarantee that a local perturbation of the measure implies a local perturbation of the body surface.

Theorem \ref{t0} is a step in this direction; it states that, under mild conditions, there is a family of perturbations of the measure forming a line segment, with the original measure at the midpoint, leading to a local perturbation of the body surface. Moreover, there is, in a sense, a huge amount of such perturbations.

Theorem \ref{t0} can also be interpreted as follows. Consider all possible convex bodies obtained from $C$ by perturbations of its boundary in the subset $\pl_- C$, and consider the set of their surface measures. Then $\nu_C$ is not an extreme point of this set.

\begin{corollary}\label{cor1}
Assume that an open subset $\OOO \subset \pl C$ of the body's boundary is of class $C^2$ and its gaussian curvature is positive. Take a point $O \not\in C$ so as the closure of the set $\pl_- C = \pl C \cap \text{\rm int}\,(\text{\rm Conv}(C \cup \{O\}))$ is contained in $\OOO$, and therefore, $\pl_+ C = \pl C \setminus \pl_- C$ contains $\pl C \setminus \OOO$. Then the statement of Theorem \ref{t0} holds true.
\end{corollary}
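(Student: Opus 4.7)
The plan is to verify the two hypotheses of Theorem \ref{t0} for an arbitrary line $l$ through $O$ meeting $\mathrm{int}(C)$ and then to invoke the theorem. First I would study the tangent locus $\Gam := \{\xi \in \pl C : (\xi - O) \in T_\xi \pl C\}$, along which $\pl K$ touches $\pl C$. Since $\Gam \subset \overline{\pl_- C} \subset \OOO$, on $\OOO$ it is cut out by the $C^1$ equation $N(\xi) \cdot (\xi - O) = 0$; its differential $-\mathrm{II}_\xi(\cdot\,, \xi - O)$ on $T_\xi \OOO$ is nonzero (because $\mathrm{II}$ is positive definite and $\xi \ne O$), so by the implicit function theorem $\Gam$ is a $C^1$ simple closed curve in $\OOO$.

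For hypothesis (i), parameterize the half-planes bounded by $l$ by $\vphi \in S^1$. Since $l$ meets $\mathrm{int}(K)$, the plane containing $P(\vphi)$ intersects $\pl K$ transversally in two rays through $O$, one in each half-plane; hence $P(\vphi) \cap \pl K$ is a single ray, tangent to $\pl C$ at the unique point $M(\vphi) \in P(\vphi) \cap \Gam$. This ray is the ray of support in $P(\vphi)$ and touches the convex curve $P(\vphi) \cap \pl C$ at $M(\vphi)$. For (ii), one first checks $B \in \pl_- C$ by writing $B = \lam O + (1 - \lam) c$ with small $\lam > 0$ and $c \in \mathrm{int}(C)$ chosen just past $B$ along $l$; it follows that the arc from $B$ to $M(\vphi)$ in $P(\vphi) \cap \pl C$ lies entirely in $\overline{\pl_- C} \subset \OOO$.

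The curvature bound will come from Meusnier's theorem: the curvature of the arc at a point $p$ equals $\kappa_n(p, T)/\sin\bt(p, \vphi)$, where $\kappa_n$ is the normal curvature of $\OOO$ in the arc's tangent direction $T$ and $\bt$ is the dihedral angle between $P(\vphi)$ and $T_p \OOO$. The numerator $\kappa_n$ is bounded above on the compact set $\overline{\pl_- C}$ by continuity of the second fundamental form; the main obstacle I foresee is the uniform lower bound $\inf \sin\bt > 0$. The argument: $\sin\bt = 0$ would force $P(\vphi) = T_p \OOO$, placing both $v$ (direction of $l$) and $(p - O)$ in $T_p \OOO$; the second condition forces $p \in \Gam$ and puts $O$ in the affine tangent plane $T_p \OOO$, so if $v \in T_p \OOO$ also, the entire line $l = O + \RR v$ would lie in the supporting plane $T_p \OOO$ of $C$, contradicting $l \cap \mathrm{int}(C) \ne \emptyset$. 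Since $\sin\bt > 0$ is continuous on the compact parameter space $\{(p, \vphi) : p \text{ in the arc of } P(\vphi)\}$, $\inf \sin\bt > 0$; this both bounds the curvature uniformly and, via the implicit function theorem, makes the arc $C^2$ (hence $C^1$). Hypotheses (i) and (ii) of Theorem \ref{t0} are thereby verified, and the corollary follows.
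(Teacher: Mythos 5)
Your proposal is correct and follows essentially the same route as the paper: you verify hypotheses (i) and (ii) of Theorem \ref{t0} for the chosen line $l$ and then invoke the theorem, using for (i) that every contact point of the circumscribed cone lies in the $C^2$ locus $\OOO$ (hence the support ray is tangent), and for (ii) a compactness/continuity argument giving a uniform curvature bound. The only difference is one of detail: where the paper simply asserts that the curvature of the planar sections is a continuous function on a compact parameter set, you make this explicit via Meusnier's formula $\kappa=\kappa_n/\sin\beta$ together with the observation that the cutting plane, containing $l$, can never coincide with a tangent (support) plane, so $\sin\beta$ stays bounded away from zero.
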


\begin{proof}
Draw a line $l$ through $O$ intersecting the interior of $C$, denote by $B$ and $B'$ the points of intersection  of $l$ with $\pl C$, as explained in the beginning of this subsection, and consider all half-planes bounded by $l$. Note that all rays of support to $C$ from $O$ intersect $\pl C$ inside $\OOO$, and therefore, are tangent to $C$. Thus, condition (i) of the theorem is satisfied.

Consider the concave curves resulting from the intersection of the half-planes with $\pl C$. The parts of these curves between $B$ and the point of tangency of the corresponding ray of support lie in $\overline{\pl_- C}$, and therefore are of class $C^2$. Their curvature is a continuous function defined on the compact set with the coordinates ({\it the distance of the point from $l$, the angle of inclination of the corresponding half-plane}), and therefore does not exceed a positive constant $k$. Thus, condition (ii) of Theorem \ref{t0} is also valid, and therefore, the statement of the theorem holds true.
\end{proof}

\begin{zam}\label{z conditions}
The hypothesis of Theorem \ref{t0} is strictly weaker than the hypothesis of Corollary \ref{cor1}. Indeed, let $C$ be the convex hull of two planar domains, $C = \rm{Conv}(D_1 \cup D_2),$ where $D_1 = \{ (x,y,z) : |x| \le 1,\ |z| \le 1 - x^2, \ y = 0 \}$ and $D_2 = \{ (x,y,z) : |x| \le 1,\ |y| \le 1 - x^2, \ z = 0 \}$; see Fig.~\ref{fig zam}. Then the hypothesis of Theorem \ref{t0} holds true for $O = (c,0,0)$ with $c > 1$,\, $B = (1,0,0)$,\, $B' = (-1,0,0)$,\,  and $k = 2$; however, the gaussian curvature is zero at any regular point of $\pl C.$
\end{zam}
    
     \begin{figure}[h]
\begin{picture}(0,110)
\scalebox{1}{
\rput(5.5,2){
\pscurve[linecolor=brown](-2,0)(-1.5,0.875)(-1,1.5)(-0.5,1.875)(0,2)(0.5,1.875)(1,1.5)(1.5,0.875)(2,0)
\pscurve[linecolor=brown](-2,0)(-1.2375,0.4375)(-0.55,0.75)(0.0625,0.9375)(0.6,1)(1.0625,0.9375)(1.45,0.75)(1.7625,0.4375)(2,0)
\pscurve[linecolor=brown](-2,0)(-1.5,-0.875)(-1,-1.5)(-0.5,-1.875)(0,-2)(0.5,-1.875)(1,-1.5)(1.5,-0.875)(2,0)
\pscurve[linewidth=0.3pt,linestyle=dashed,linecolor=brown](2,0)(1.2375,-0.4375)(0.55,-0.75)(-0.0625,-0.9375)(-0.6,-1)(-1.0625,-0.9375)(-1.45,-0.75)(-1.7625,-0.4375)(-2,0)
%%%%%%%%%%%%%%%
\psline[linewidth=0.2pt,linecolor=brown](-1.5,0.875)(-1.2375,0.4375)(-1.5,-0.875)
\psline[linewidth=0.2pt,linecolor=brown](-1,1.5)(-0.55,0.75)(-1,-1.5)
\psline[linewidth=0.2pt,linecolor=brown](-0.5,1.875)(0.0625,0.9375)(-0.5,-1.875)
\psline[linewidth=0.2pt,linecolor=brown](0,2)(0.6,1)(0,-2)
\psline[linewidth=0.2pt,linecolor=brown](0.5,1.875)(1.0625,0.9375)(0.5,-1.875)
\psline[linewidth=0.2pt,linecolor=brown](1,1.5)(1.45,0.75)(1,-1.5)
\psline[linewidth=0.2pt,linecolor=brown](1.5,0.875)(1.7625,0.4375)(1.5,-0.875)
\psline[linestyle=dashed,linewidth=0.4pt,linecolor=blue](-2,0)(2,0)
\psline[linewidth=0.8pt,linecolor=blue](4,0)(2,0)
\rput(-2.3,-0.1){$B'$}
\rput(2.15,-0.25){$B$}
\rput(4.1,-0.25){$O$}
\psdots[dotsize=2.5pt](4,0)
}}
\end{picture}
\caption{A convex body satisfying the hypothesis of Theorem \ref{t0}.}
\label{fig zam}
\end{figure}
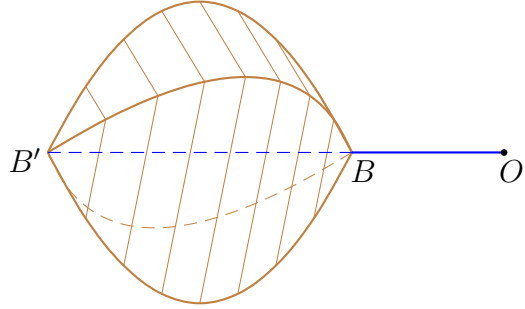 

From Theorem \ref{t0} one derives an important statement concerning linear Problem \eqref{probl linear}, and therefore also the corresponding generalized Newton problem \eqref{probl C}.

\begin{theorem}\label{t_linear}
Let $C_1 \subset C \subset C_2$, and let the point $O \not\in C$ be such that the set $\pl_- C = \pl C \cap \text{\rm int}\,(\text{\rm Conv}(C \cup \{O\}))$ does not intersect $\pl C_1$ and $\pl C_2$, that is, $\pl_- C \cap (\pl C_1 \cup \pl C_2) = \emptyset$. Draw a line $l$ through $O$ and the interior of $C$, and assume that the hypothesis of Theorem \ref{t0} is satisfied. Then either $\nu_C$ is not a solution to Problem \eqref{probl linear}, or $\nu_C$ is included in a huge set of solutions: there exists an infinite-dimensional linear space $\MMM$ of signed measures such that for any $\nu \in \MMM$ and some $\ve(\nu) > 0$, all measures of the kind $\nu_C + s \nu, \ s \in [-\ve(\nu),\, \ve(\nu)]$, are solutions.
\end{theorem}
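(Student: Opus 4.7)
The strategy is to apply Theorem \ref{t0} and exploit the linearity of $\FFF$. Let $\{C^{(\theta)}(s)\}_{\theta \in \Theta,\, s \in [-1,1]}$ be the families produced by Theorem \ref{t0} for the given line $l$ and point $O$, set $\mu_\theta := \nu_{C^{(\theta)}(1)} - \nu_C$, and define $\MMM := \mathrm{span}\{\mu_\theta : \theta \in \Theta\}$. By part (d) of Theorem \ref{t0}, $\MMM$ is an infinite-dimensional subspace of signed measures, and $\MMM$ is the candidate for the space promised by the theorem.

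First I would verify that each one-parameter family is admissible for small parameter values. Property (b) of Theorem \ref{t0} keeps $\pl_+ C$ inside $\pl C^{(\theta)}(s)$, so $C^{(\theta)}(s)$ differs from $C$ only in an arbitrarily small neighborhood of $\pl_- C$, which shrinks to $\emptyset$ as $s \to 0$. Combining this with the separation hypothesis $\pl_- C \cap (\pl C_1 \cup \pl C_2) = \emptyset$ and the compactness of $\pl_- C$, one extracts $\ve_\theta > 0$ such that $C_1 \subset C^{(\theta)}(s) \subset C_2$ for $|s| \le \ve_\theta$; hence $\nu_{C^{(\theta)}(s)} \in \Up(C_1,C_2)$ throughout this range.

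Linearity of $\FFF$ combined with property (c) of Theorem \ref{t0} yields
$$
\FFF(\nu_{C^{(\theta)}(s)}) = \FFF(\nu_C) + s\, \FFF(\mu_\theta).
$$
If $\FFF(\mu_{\theta_0}) \ne 0$ for some $\theta_0 \in \Theta$, then choosing $s \in [-\ve_{\theta_0},\ve_{\theta_0}]$ of sign opposite to that of $\FFF(\mu_{\theta_0})$ produces an admissible surface measure with strictly smaller functional value, so $\nu_C$ is not a solution to \eqref{probl linear} — this is the first alternative. Otherwise $\FFF(\mu_\theta) = 0$ for every $\theta$, so by linearity $\FFF$ vanishes on $\MMM$, and $\FFF(\nu_C + s\nu) = \FFF(\nu_C)$ for every $\nu \in \MMM$ and every $s \in \RR$.

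The key difficulty is to promote this identity to actual admissibility: for every $\nu \in \MMM$, I need some $\ve(\nu) > 0$ with $\nu_C + s\nu \in \Up(C_1,C_2)$ for all $|s| \le \ve(\nu)$. Writing $\nu = \sum_{i=1}^N c_i \mu_{\theta_i}$, the plan is first to check, via Alexandrov's theorem, that $\nu_C + s\nu$ is the surface measure of some convex body for small $|s|$: condition (a) is inherited since each $\mu_{\theta_i}$ is a difference of surface measures, condition (b) is open under small total-variation perturbations of $\nu_C$, and non-negativity follows from the pointwise bound $\mu_{\theta_i}^- \le \ve_{\theta_i}^{-1}\,\nu_C$ extracted from $\nu_C \pm \ve_{\theta_i}\mu_{\theta_i} \ge 0$. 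Then, by stability of Alexandrov's theorem, the resulting body is Hausdorff-close to $C$ up to a translation, so the separation hypothesis keeps it inside $\mathfrak{C}(C_1,C_2)$ for $|s|$ sufficiently small. I expect this realization step to be the main technical obstacle; a more concrete alternative is to revisit the construction in Theorem \ref{t0} and, using the essentially disjoint action of the different $\mu_\theta$'s on $\pl_- C$, combine the individual perturbations directly into a single convex body with surface measure $\nu_C + s\nu$.
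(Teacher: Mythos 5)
Your overall skeleton (linearity of $\FFF$ plus the midpoint property gives the dichotomy) is the same as the paper's, but there is a genuine gap at the step you yourself flag as the ``main technical obstacle'', and the fix you sketch would not close it. You define $\MMM$ as the span of the director vectors $\mu_\theta$ and then must show that $\nu_C + s\sum_i c_i\mu_{\theta_i}$ is an admissible surface measure; your proposed route via Alexandrov's theorem plus ``stability'' only yields a convex body, determined up to translation, that is Hausdorff-close to $C$. Hausdorff closeness is not enough to conclude $C_1\subset C_s\subset C_2$: in the motivating Newton setting $\pl C_1$ is the bottom disc and lies \emph{inside} $\pl C$, so arbitrarily small Hausdorff perturbations (and the unavoidable translation indeterminacy) can destroy the inclusion $C_1\subset C_s$. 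What actually saves the single-parameter families is property (b) of Theorem \ref{t0} — the perturbed bodies contain $\pl_+C$, hence Conv$(\pl_+C)\supset C_1$, and lie in the intersection of the supporting half-spaces at points of $\pl_+C$, which equals Conv$(C\cup\{O\})\subset C_2$ (after moving $O$ sufficiently close to $C$ along $l$, which the hypothesis $\pl_-C\cap(\pl C_1\cup\pl C_2)=\emptyset$ permits). This also replaces your heuristic ``the perturbation region shrinks to $\emptyset$ as $s\to 0$'', which is not asserted by Theorem \ref{t0}, and it gives admissibility for \emph{all} $s\in[-1,1]$, not only small $s$.

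The missing idea that removes the realization problem entirely is already contained in statement (d) of Theorem \ref{t0}: the union $\cup_{\theta\in\Theta}\{\lam(\nu_{C^{(\theta)}(1)}-\nu_C):\lam\in\RR\}$ is itself a linear space (the construction is linear in the functional parameter $\theta$), so it coincides with your span $\MMM$, and every $\nu\in\MMM$ is of the form $\lam(\nu_{C^{(\theta)}(1)}-\nu_C)$ for a \emph{single} $\theta\in\Theta$. Consequently $\nu_C+s\nu=\nu_{C^{(\theta)}(\lam s)}$ is automatically the surface measure of one of the already-constructed admissible bodies whenever $|s|\le\ve(\nu):=1/|\lam|$, which is exactly how the paper concludes; no new appeal to Alexandrov's theorem or to a stability estimate is needed. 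With that observation, together with the containment argument above, your dichotomy argument goes through and reproduces the paper's proof.
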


\begin{proof}
Indeed, making if necessary the point $O$ sufficiently close to $C$ on the line $l$, one can assure that, on one hand, the set {Conv}$\,(C \cup \{O\})$ is contained in $C_2$ and, on the other hand, {Conv}$\,(\pl_+ C)$ contains $C_1$. All convex sets, corresponding to the elements of the segment of measures indicated in the statement of Theorem \ref{t0}, contain $\pl_+ C$, and therefore, also contain $C_1$. Further, all such convex sets lie in the intersection of all half-spaces bounded by planes of support through points of $\pl_+ C$, and this intersection of half-spaces coincides with {Conv}$\,(C \cup \{O\})$. This proves that these convex sets are contained in $C_2$.

If $\nu_C$ is a solution to Problem \eqref{probl linear} then for any $\theta \in \Theta$, all measures $\nu_{C^{(\theta)}(s)} = \nu_{C} + s\, (\nu_{C^{(\theta)}(1)} - \nu_C), \ s \in [-1,\, 1]$ forming the corresponding linear segment are also solutions. Take $\MMM = \cup_{\theta \in \Theta} \{ \lam(\nu_{C^{(\theta)}(1)} - \nu_C) : \lam \in \RR \}$; by statement (d) of Theorem \ref{t0}, $\MMM$ is an infinite-dimensional space. Further, for any $\nu \in \MMM$ there exist $\lam \in \RR$ and $\theta \in \Theta$ such that $\nu = \lam(\nu_{C^{(\theta)}(1)} - \nu_C)$, and taking $\ve(\nu) = 1/|\lam|$, one obtains the statement of Theorem \ref{t_linear}.
\end{proof}

Going back to the particular Newton problem \eqref{probl u}, we first note that the set $\pl C \setminus (\pl C_1 \cup \pl C_2)$ coincides with the part of the graph of $u$ with $(x,y) \in \text{int}(\Om)$ and $0 < u(x,y) < M$, where int$(\Om)$ means the interior of $\Om$. Recall that to each $u$ one naturally assigns the surface measure of the corresponding convex body $\{ (x,y,z) : (x,y) \in \Om,\ 0 \le z \le u(x,y) \}$.

Take a point $(x_0, y_0) \in \text{int}(\Om)$ and a value $z_0 > u(x_0, y_0)$. For any $\vphi$ consider the ray of support $z = z_0 + \kappa_\vphi t, \ t \ge 0$ in the half-plane $x = x_0 + t\cos\vphi,\, y =y_0 + t\sin\vphi,\, t \ge 0$  to the graph of the induced function of one variable $z = u(x_0 + t\cos\vphi,\, y_0 + t\sin\vphi),\ t \ge 0$, and fix the smallest value $t = t_\vphi > 0$ corresponding to the intersection of the ray of support with the graph.

Applying Theorem \ref{t_linear} to Problem \eqref{probl u}, we come to the following statement.

\begin{corollary}\lab{cor2}
Let $(x_0,y_0) \in \text{\rm int}(\Om)$ and $0 < u(x_0,y_0) < z_0 < M$. Suppose that (i) for all $\vphi$, the corresponding ray of support touches the graph of the corresponding induced function, and (ii) there exists $k > 0$ such that $\frac{d^2}{dt^2}\, u(x_0 + t\cos\vphi,\, y_0 + t\sin\vphi) \le k$ for all $\vphi$ and $0 \le t \le t_\vphi$. Then either $u$ is not a solution to  \eqref{probl u}, or $u$ is included in a huge set of solutions: let $\nu$ be the measure corresponding to $u$; there exists an infinite-dimensional linear space $\MMM$ of signed measures in $S^2$ such that for any $\nu' \in \MMM$ and some $\ve(\nu') > 0$, all measures of the kind $\nu + s\nu', \ s \in [-\ve(\nu'),\, \ve(\nu')]$, correspond to solutions of  \eqref{probl u}.
\end{corollary}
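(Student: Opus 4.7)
The plan is to translate the setup of Corollary \ref{cor2} into that of Theorem \ref{t_linear} and apply that theorem directly. I would take the auxiliary point $O = (x_0, y_0, z_0)$ and the line $l$ to be the vertical line through $O$. Since $0 < u(x_0,y_0) < z_0 < M$ and $(x_0, y_0) \in \text{int}(\Om)$, the point $O$ lies outside $C$ but inside $C_2 = \Om \times [0, M]$ (the latter ensures $\text{Conv}(C \cup \{O\}) \subset C_2$ for free); the line $l$ meets $\pl C$ at $B = (x_0, y_0, u(x_0,y_0))$ and $B' = (x_0, y_0, 0)$; and the configuration $[O, B) \cap C = \emptyset$, $[B, B'] \subset C$ required in the setup of Theorem \ref{t0} holds. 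In the half-plane bounded by $l$ at angle $\vphi$, the concave boundary curve from $B$ to $B'$ coincides with the graph of $t \mapsto u(x_0 + t\cos\vphi,\, y_0 + t\sin\vphi)$ glued at its far end to a radial segment of $\Om \times \{0\}$. Under this identification, hypothesis (i) of the corollary is a verbatim restatement of hypothesis (i) of Theorem \ref{t0}, and hypothesis (ii) bounds the planar curvature $|u_{tt}|/(1 + u_t^2)^{3/2} \le |u_{tt}| \le k$ on the relevant portion of each curve, and also yields the $C^{1,1}$ regularity required by Remark \ref{z curvature}.

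The remaining hypothesis of Theorem \ref{t_linear} is the separation $\pl_- C \cap (\pl C_1 \cup \pl C_2) = \emptyset$. The set $\pl_- C$ is the cap of the graph of $u$ visible from $O$, bounded by the tangency points produced by condition (i). The strict inequalities $(x_0, y_0) \in \text{int}(\Om)$ and $u(x_0, y_0) < z_0 < M$ are designed precisely so that, for a suitable choice of $O$, this cap lies strictly inside $\text{int}(\Om) \times (0, M)$, hence avoids $\pl C_1 = \Om \times \{0\}$, the top $\Om \times \{M\}$, and the lateral cylinder $\pl \Om \times [0, M]$ that together make up $\pl C_2$. If the given $z_0$ does not already produce strict separation, I would shrink $z_0$ slightly toward $u(x_0, y_0)$, mirroring the adjustment used in the proof of Theorem \ref{t_linear} itself; this only contracts the cap and preserves conditions (i) and (ii).

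With these verifications in hand, Theorem \ref{t_linear} immediately supplies the infinite-dimensional space $\MMM$ of signed measures and the thresholds $\ve(\nu')$ appearing in the statement. Since \eqref{probl u} is exactly the instance of \eqref{probl linear} with $f(n) = \langle n, e_3\rangle_+^3$ and with $\Up(C_1, C_2)$ equal to the set of surface measures of subgraphs of admissible $u$, and since the correspondence $u \mapsto \nu_C$ is a bijection up to vertical translation on that class, the dichotomy established for \eqref{probl linear} transfers directly to \eqref{probl u}. The only step I expect to require genuine care is the separation verification, i.e.\ ensuring that $\pl_- C$ remains in the open region $\text{int}(\Om) \times (0, M)$; the small downward shift of $z_0$ is the mechanism that handles this uniformly.
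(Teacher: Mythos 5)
Your proposal is correct and follows essentially the same route as the paper: take $O=(x_0,y_0,z_0)$ with $l$ the vertical line through it, identify $B=(x_0,y_0,u(x_0,y_0))$, $B'=(x_0,y_0,0)$, match conditions (i)--(ii) of the corollary with those of Theorem~\ref{t0}, and apply Theorem~\ref{t_linear}, translating back to Problem~\eqref{probl u}. The only remark is that your fallback of shrinking $z_0$ is unnecessary (though harmless and correctly argued): since $O\in C_2$ gives $\text{Conv}(C\cup\{O\})\subset C_2$, one has $\pl_- C\subset \text{int}\,(\text{Conv}(C\cup\{O\}))\subset \text{int}(C_2)$, so the separation $\pl_- C\cap(\pl C_1\cup\pl C_2)=\emptyset$ holds automatically for the given $z_0$.
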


Now using Corollaries \ref{cor1} and \ref{cor2}, one comes to

\begin{corollary}\label{cor3}
Assume that in an open subset of $\Om$ the function $u$ is of class $C^2$ and the matrix of second derivatives $D^2 u$ is non-degenerate (and therefore positive definite). Then the statement of Corollary \ref{cor2}  holds true.
\end{corollary}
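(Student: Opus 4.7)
My plan is to invoke Corollary~\ref{cor1} applied to the subgraph body $C = \{(x,y,z) : (x,y) \in \Om,\ 0 \le z \le u(x,y)\}$, and then to run the argument of Theorem~\ref{t_linear} exactly as it is used in the derivation of Corollary~\ref{cor2}.

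Let $W \subset \Om$ denote the open set on which $u \in C^2$ and $D^2u$ is non-degenerate. Since $u$ is concave on $\Om$, $D^2u$ is negative semi-definite on $W$, and non-degeneracy therefore forces $D^2u$ to be negative definite throughout $W$. Consequently the Gaussian curvature of the graph of $u$ at the point $(x,y,u(x,y))$ equals
\[
K = \frac{\det D^2u}{(1 + |\nabla u|^2)^2} > 0,
\]
so the image $\OOO$ of $W$ under $(x,y) \mapsto (x,y,u(x,y))$ is an open $C^2$ subset of $\pl C$ of positive Gaussian curvature, as required by Corollary~\ref{cor1}.

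Next I would pick $(x_0,y_0) \in W$ with $0 < u(x_0,y_0) < M$. Such a point exists, for otherwise $u$ would be identically $0$ or identically $M$ on some component of $W$, forcing $D^2u \equiv 0$ there and contradicting non-degeneracy. Set $O = (x_0,y_0,z_0)$ with $z_0 \in \bigl(u(x_0,y_0),\,M\bigr)$, so $O \notin C$. The set $\pl_- C$ is a cap of the graph of $u$ around $(x_0,y_0,u(x_0,y_0))$ which shrinks to that single point as $z_0 \to u(x_0,y_0)^+$; using the uniform lower bound on $-D^2u$ in a compact neighborhood of $(x_0,y_0)$ inside $W$, a Taylor estimate shows that for $z_0$ sufficiently close to $u(x_0,y_0)$ one has $\overline{\pl_- C} \subset \OOO$. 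The same smallness ensures $\pl_- C \cap (\pl C_1 \cup \pl C_2) = \emptyset$, since $\pl_- C$ projects into a small disc around $(x_0,y_0)$ lying in $\text{int}(\Om)$ where $0 < u < M$.

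Corollary~\ref{cor1} now yields the conclusion of Theorem~\ref{t0} for $C$, and Theorem~\ref{t_linear} (applied exactly as in the proof of Corollary~\ref{cor2}) produces the infinite-dimensional space $\MMM$ claimed in Corollary~\ref{cor2}, which is what Corollary~\ref{cor3} asserts. The main obstacle is verifying $\overline{\pl_- C} \subset \OOO$ uniformly in the direction from $O$; this uniformity rests on the strict negative definiteness of $D^2u(x_0,y_0)$, which provides a uniform lower bound on $-g_\vphi''(0)$ for the restriction $g_\vphi(t) = u(x_0+t\cos\vphi,\,y_0+t\sin\vphi)$, forcing the tangent points to collapse to $(x_0,y_0)$ as $z_0 \downarrow u(x_0,y_0)$. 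Under $C^2$ regularity alone, with $D^2u$ allowed to degenerate, this uniform collapse could fail.
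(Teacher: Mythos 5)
Your proposal is correct and follows essentially the same route the paper intends: it deduces Corollary \ref{cor3} by applying Corollary \ref{cor1} to the subgraph body (positive Gaussian curvature coming from $\det D^2u>0$, with $D^2u$ negative definite by concavity) and then running the Theorem \ref{t_linear}/Corollary \ref{cor2} argument, choosing $O=(x_0,y_0,z_0)$ with $z_0$ close enough to $u(x_0,y_0)$ so that $\overline{\pl_- C}$ lies in the good $C^2$ patch and avoids $\pl C_1\cup\pl C_2$. The paper leaves these verifications implicit ("using Corollaries \ref{cor1} and \ref{cor2}"), and your filling in of the collapse of $\pl_- C$ via the strict definiteness of $D^2u(x_0,y_0)$ is exactly the needed detail.
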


\begin{zam}\label{zf}
Corollaries \ref{cor2} and \ref{cor3} remain true if the integrand $1/(1 + |\nabla u(x,y)|^2)$ in \eqref{cor2} is substituted with $f(\nabla u(x,y))$, where $f : \RR^2 \to \RR$ is an arbitrary bounded continuous function.
\end{zam}

Compare Corollaries \ref{cor2} and \ref{cor3} with the result obtained in \cite{BrFK} (Remark 3.4): if in an open subset of $\Om$ the function $u$ is of class $C^2$ and the matrix of second derivatives $D^2 u$ is non-degenerate, then $u$ is not a solution to the particular Newton problem. Apart from an obvious similarity of these statements, there is some difference. Namely, according to Remark \ref{zf}, our statements remain valid in a more general case when the integrand in \eqref{probl u} is substituted with $f(\nabla u(x,y))$. On the other hand, we do not claim that $u$ is not a solution, but instead, if $u$ is a solution then the set of solutions is extremely large and $u$ is not its extreme point.

\section{Proof of the main theorem}\label{sec proof}

We will prove a slightly stronger version of Theorem \ref{t0}, with condition (ii) replaced by a weaker condition (ii)$'$; see below. Condition (i) of the theorem remains unchanged.

The idea of the proof is as follows. We take a 1-parameter family of cones circumscribed about the body $C$. The vertices of these cones form a line segment outside $C$. Then we translate each of these cones along the segment, with the translation continuously depending on the parameter. Under certain conditions, the family of translated cones is tangent to another convex body. Knowing the magnitude of translation and the surface measure of the original body $C$, one is able to determine the surface measure of the new body. Using the obtained description of the correspondence of measures, one constructs a family of translations of the family of cones that defines a family of convex bodies with their surface measures forming a line segment, with $\nu_C$ at the midpoint. This construction depends on a functional parameter.

Introduce some additional notation. Determine an orthogonal coordinate system with the coordinates $x,\, y,\, z$ so as the point $O$ is at the origin and the positive $x$-semiaxis coincides with the ray $OB$. Consider a $C^1$ function $\al(t),$ $t \in [0,\, 1]$ such that $\al(0) = 0,\ \al'(t) > 0$ for all $0 \le t \le 1$ and $(\al(1), 0, 0)$ coincides with the point $B$. Denote $B_t = (\al(t), 0, 0)$; thus, $B_0 = O$ and $B_1 = B.$

Let $K_t$ be the closed cone with the vertex at $B_t$ circumscribed about $C$. Clearly, all cones $K_t$ are convex, and $K_0 = K$. If $B$ is a regular point of $\pl C$, then $K_1$ is a half-space bounded by the tangent plane to $C$ through $B$.

Using the circumscribed cones, the body $C$ can be represented as
$$
C = \big( \cap_{0 \le t \le 1} K_t \big) \cap \text{Conv} \big( C,\, \{O\} \big).
$$

For any $\vphi$ designate by $\Pi_\vphi$ the half-plane $y = r\cos\vphi,\ z = r\sin\vphi, \ r \ge 0$; it is bounded by the straight line $OB$ and has the angle of inclination $\vphi$ with respect to the half-plane of reference $z=0,\ y \ge 0$; see Fig.~\ref{fig body2}.

   \begin{figure}[h]
\begin{picture}(0,180)
\scalebox{0.95}{
\rput(3.8,0.45){
\psecurve[linecolor=brown](0.5,2)(3,0.1)(6.5,0.3)(9.5,2.5)(5,5)(0.5,2)(3,0.1)(6.5,0.3)
 \psline(-3,2)(0.5,2)
\psline[arrows=->,arrowscale=1.6](-3,2)(-4,0.25)
\psline[arrows=->,arrowscale=1.6](-3,2)(-3,6)
\psline[linewidth=0.6pt,linestyle=dashed](0.5,2)(9.4,2)
         \psline[arrows=->,arrowscale=1.6](9.4,2)(10.5,2)
\pscurve[linecolor=brown](0.5,2)(1.5,2.9)(3,3.6)(5.7,3.9)(7.9,3.3)(9.4,2)
\psline[linecolor=blue,linewidth=0.4pt](-3,2)(2.7,4.45)
               \psline[linecolor=blue,linewidth=0.4pt](-3,2)(3.1,3.64)
       \psline[linecolor=blue,linewidth=0.4pt](-3,2)(2.5,0.25)
\pscurve[linecolor=brown,linewidth=0.6pt](2.7,4.45)(2.8,4.4)(3.1,3.64)
(2.95,1)(2.6,0.29)(2.5,0.25)
\pscurve[linecolor=brown,linewidth=0.4pt,linestyle=dashed](2.7,4.45)(2.6,4.33)(2.4,3.64)
   (2.25,1)(2.4,0.35)(2.5,0.25)
   \psarc[linecolor=blue,linewidth=0.4pt](-3,2){0.25}{117}{240}
\rput(-3.4,2){\scalebox{0.85}{$\vphi$}}
\rput(-2.9,1.7){$O$}
\rput(9.6,1.7){$B'$}
\rput(0.34,1.8){\scalebox{0.9}{$B$}}
%%%%%
\psecurve[linecolor=brown,linewidth=0.6pt](0.85,3.15)(1.3,3.57)(1.67,3.03)(1.05,0.98)(0.6,1.1)
\psecurve[linecolor=brown,linewidth=0.4pt,linestyle=dashed](1.5,3.75)(1.3,3.57)(0.7,2.2)(1.05,0.98)(1.25,0.8)
%%%%%
                     \psline[linecolor=blue,linewidth=0.4pt](-0.2,2)(1.67,3.03)
                     \psline[linecolor=blue,linewidth=0.4pt](-0.2,2)(1.05,0.98)
                     \psline[linecolor=blue,linewidth=0.4pt](-0.2,2)(1.3,3.57)
\rput(-0.32,1.75){\scalebox{0.9}{$B_t$}}
\rput(3.45,3.4){\scalebox{0.9}{$M_\vphi$}}
\rput(1.9,2.8){\scalebox{0.8}{$M_{t,\vphi}$}}
            %\rput(2.1,1){\scalebox{1.2}{$\pl_- C$}}
       \rput(5.4,2.8){\scalebox{1.3}{$C_\vphi$}}
        % \rput(7.25,2.3){$\scalebox{1.2}{l}$}
       \rput(-1.25,1.1){$\scalebox{1.3}{K}$}
           \rput(0.2,1.3){\scalebox{1.1}{$K_t$}}
                      \rput(-1,4.4){\scalebox{1.1}{$\Pi_\vphi$}}
              \rput(6,-0.5){$\scalebox{1.6}{C}$}
              \pspolygon[linewidth=0.2pt](-3,2)(10,2)(9,4)(-4,4)
              \psline[linewidth=0.2pt,arrows=->,arrowscale=2](-3,2)(-4,4)
                            \rput(-4.1,3.7){$r$}
              \rput(10.5,1.75){$x$}
              \rput(-3.5,0.5){$y$}
              \rput(-3.25,5.8){$z$}
   \psdots[dotsize=2.5pt](-3,2)(0.5,2)(-0.2,2) (3.1,3.64)(1.67,3.03)
   }}
\end{picture}
\caption{The convex body $C$ and some additional notation.}
\label{fig body2}
\end{figure}
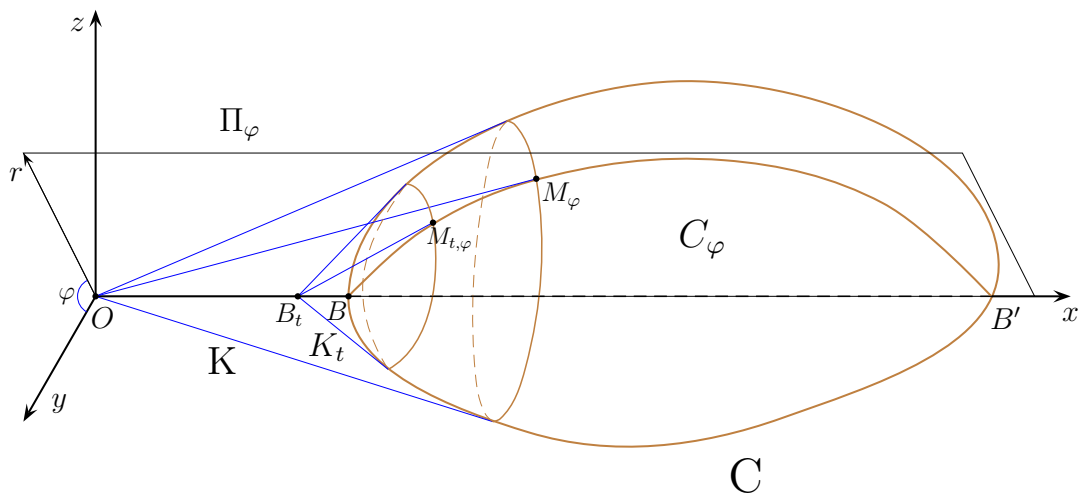  

The intersection of $C$ with $\Pi_\vphi$ is a 2-dimensional convex body $C_\vphi$ bounded below by a segment of the positive $x$-semiaxis and above by the concave curve $\pl C \cap \Pi_\vphi$. Draw the ray of support from $O$ to this curve and denote by $M_\vphi$ the closest to $O$ point of intersection of the ray with the curve. Denote by $\pl_- C_\vphi$ the part of the curve between $B$ and $M_\vphi$ (including $B$ and excluding $M_\vphi$). $\pl_- C_\vphi$ is the intersection of $\Pi_\vphi$ with $\pl_- C$, and it is the graph of a convex function $r \mapsto x(r,\vphi)$. It can be expressed as
$$
\pl_- C_\vphi = \pl C_\vphi \cap \text{int} \big( \text{Conv} (C_\vphi \cup \{O\}) \big) = \pl_- C \cap \Pi_\vphi.
$$
Denote by $\pl_+ C_\vphi$ the resting part of the curve between $M_\vphi$ and $B'$. It can be expressed as  %is the intersection of $\Pi_\vphi$ with $\pl_+ C$, an
$$
\pl_+ C_\vphi = \pl C_\vphi \cap \pl \big( \text{Conv} (C_\vphi \cup \{O\}) \big) \setminus [B,\, B') = \pl_+ C \cap \Pi_\vphi.
$$

        %Of course, one has $$ \cup_\vphi \pl_- C_\vphi = \pl_- C \qquad \text{and} \qquad \cup_\vphi \pl_+C_\vphi = \pl_+C. $$

Draw the ray of support in $\Pi_\vphi$ from a point $B_t$ to the curve $\pl C \cap \Pi_\vphi$. The intersection of the ray with the curve is a line segment (which may degenerate to a point). Denote by $M_{t,\vphi}$ the endpoint of this segment closest to $B_t$. In particular, we have $M_{0,\vphi} = M_{\vphi}$ and $M_{1,\vphi} = B$.

Recall that condition (i) of Theorem \ref{t0} states that for any $\vphi$ the ray of support from $O$ to the curve in the half-plane $\Pi_\vphi$ touches this curve.

We also consider the following
\vspace{2mm}

{\bf Condition (ii)$'$.} {\it For any $0 < \tau < 1$ there exists $k = k(\tau)$ such that for any $\vphi$, the part of the curve $\pl_- C_\vphi$ between $M_{\tau,\vphi}$ and $M_{\vphi}$ is of class $C^1$ and has curvature $\le k$.}
\vspace{2mm}

We will prove the following theorem.
\vspace{2mm}

{\bf Theorem 1$'$.} {\it Let conditions (i) and (ii)$'$ be satisfied; then the statement of Theorem \ref{t0} holds true.}
\vspace{2mm}

According to condition (ii)$'$, the function $r \mapsto x(r,\vphi)$ is of class $C^1$.

Introduce some additional functions associated with the curve $\pl_- C_\vphi$.  At each point $(x(r,\vphi), r)$ draw the tangent line to $\pl_- C_\vphi$ in the plane containing the half-plane $\Pi_\vphi$. This line intersects the $x$-axis at a point $B_t$; thus, the  monotone non-increasing function $r \mapsto t(r,\vphi)$ is defined.

This function can also be found from the implicit equation
$$
\al(t) = x(r,\vphi) - r\, \frac{\pl x}{\pl r}(r,\vphi).
$$
Since the right hand side of this equation is continuous in $r$ and the function $\al$ is continuous and strictly increasing, we conclude that the function $r \mapsto t(r,\vphi)$ is continuous.

It is also convenient to use the generalized inverse to the function $t(\cdot, \vphi)$. Namely, for $0 \le t \le 1$ designate by $r(t, \vphi)$ the smallest value of $r$ such that $t(r,\vphi) = t$. The function $t \mapsto r(t,\vphi)$ defined this way is strictly decreasing and right semicontinuous. Additionally, we have $r(1,\vphi)= 0$ and $M_{t,\vphi} = (x(r(t,\vphi), \vphi),\, r(t,\vphi))$.

Further, we denote by $\s = \s(t, \vphi)$ the inverse slope of the corresponding tangent line. In other words, $\s(t, \vphi)$ is the cotangent of the angle of inclination of the tangent line to the curve $\pl_- C_\vphi$ in the half-plane $\Pi_\vphi$. For all $\vphi$, the function of one variable $t \mapsto \s(t, \vphi)$ is monotone non-increasing.

\begin{lemma}\label{l sigma}
(a) The function $\s(t, \vphi)$ is continuous.

(b) The partial derivative $\frac{\pl\s}{\pl\vphi}(t,\vphi)$ exists almost everywhere.
\end{lemma}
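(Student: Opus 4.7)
For (a), my plan is to use the geometric characterization of $\s(t,\vphi)$ as the cotangent of the inclination angle of the support line from the external point $B_t$ to the planar convex curve $\pl C\cap\Pi_\vphi$, viewed inside the half-plane $\Pi_\vphi$. As $(t',\vphi')\to(t,\vphi)$ the point $B_{t'}\to B_t$ by continuity of $\al$, and the planar curves $\pl C\cap\Pi_{\vphi'}$ converge to $\pl C\cap\Pi_\vphi$ in the Hausdorff distance because the half-planes rotate continuously about the $x$-axis. The classical fact that the angle of a support line from an external point to a planar convex set depends continuously on both the point and the set (in the Hausdorff topology) then delivers the joint continuity of $\s$. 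The delicate point is the possible non-uniqueness of the tangent point $M_{t,\vphi}$ when $\pl_- C_\vphi$ contains a straight segment: there $r(t,\vphi)$ may jump, but the direction of the support line, and therefore $\s$, is unchanged throughout the flat piece.

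For (b), the plan is to prove the stronger statement that, for each fixed $t\in[0,1)$, the map $\vphi\mapsto\s(t,\vphi)$ is Lipschitz on $[0,2\pi)$. Rademacher's theorem then yields differentiability in $\vphi$ for a.e. $\vphi$, and Fubini shows that the measurable exceptional set in the $(t,\vphi)$-plane has two-dimensional Lebesgue measure zero. To obtain the Lipschitz bound I intersect the convex cone $K_t$ with the transverse plane $\{x=\al(t)+1\}$. The resulting set $D_t$ is planar, convex, bounded (since $C$ is bounded), and contains the point $(\al(t)+1,0,0)$ in its interior, because the $x$-axis crosses $\text{int}\,C$. In polar coordinates on this plane, centered at that interior point and with angular coordinate $\vphi$ inherited from the ambient setup, the boundary of $D_t$ is described by the radial function $\rho(\vphi)=1/\s(t,\vphi)$: the ruling of $K_t$ in direction $(\s(t,\vphi),\cos\vphi,\sin\vphi)$ meets the transverse plane at distance $1/\s(t,\vphi)$ from the axis. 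The radial function of a bounded convex body with the origin in its interior is Lipschitz (a classical fact, readily proved by a chord comparison), and since $\rho$ is bounded away from $0$ and $\infty$ on $[0,2\pi)$, so is $\s(t,\cdot)=1/\rho$.

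The principal obstacle I anticipate is the rigorous handling of (a) near configurations where $\pl_- C_\vphi$ has a straight segment, so that $r(t,\vphi)$ has jumps while $\s$ must stay continuous, and at the endpoint $t=1$ where the cone $K_t$ degenerates into a half-space. For (b) a minor caveat is that the Lipschitz constant depends on $t$ and blows up as $t\uparrow 1$ (the cross-section $D_t$ becomes unbounded); this is harmless because the argument applies for every fixed $t<1$ and the slice $\{t=1\}$ has two-dimensional measure zero. Notably, this line of argument uses essentially only the convexity of $C$ and the fact that the $x$-axis crosses $\text{int}\,C$, making no direct use of the stronger hypotheses (i) and (ii)$'$.
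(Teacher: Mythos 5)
Your part (b) follows the same construction as the paper (cut the cone $K_t$ by the plane at unit distance from its vertex and read off $1/\s(t,\cdot)$ as a radial function), but the step ``$D_t$ is bounded since $C$ is bounded'' is wrong, and it is load-bearing for your Lipschitz argument. $K_t$ is an unbounded cone, and its section $\{x=\al(t)+1\}\cap K_t$ is bounded if and only if $C\subset\{x>\al(t)\}$; equivalently, as soon as $C$ contains a point with $x$-coordinate $\le\al(t)$, the support ray from $B_t$ in some half-plane $\Pi_\vphi$ tilts backwards and $\s(t,\vphi)\le 0$. This happens under the hypotheses of the theorem: take $C$ a ball and $l$ a chord not through the centre; then $\min_{p\in C}x_p<\al(1)$, so for every $t$ in a whole interval $[t_*,1)$ and an open set of $\vphi$ one has $\s(t,\vphi)\le0$. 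There the identification $\rho(\vphi)=1/\s(t,\vphi)$ breaks down (the boundary ray never meets the transverse plane), $\rho$ is not ``bounded away from $0$ and $\infty$'', and the Lipschitz bound for $\s(t,\cdot)$ as you derive it collapses; since the bad set is not confined to the null slice $t=1$, the Fubini remark does not repair it. The conclusion itself is salvageable: because $B_t\notin C$ for $t<1$, the direction set of $K_t$ lies in an open half-space, so one can cut by a plane transversal to \emph{all} rays of $K_t$ (or argue with the support function); the relation between that section's radial function and $\s$ is then a smooth change of variables and your Lipschitz--Rademacher--Fubini scheme goes through. (Note that $|\s|$, not $\s$, is bounded in the proof of Lemma~\ref{l curvature}, confirming that negative values of $\s$ are within the paper's scope.)

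Your part (a) is a genuinely different route from the paper's, and for $t$ ranging in any $[0,\tau]$ with $\tau<1$ it is sound: $B_t$ stays at positive distance from $C$, the sections $C_\vphi$ vary continuously in Hausdorff distance because $l$ meets ${\rm int}\,C$, and the direction of the upper tangent from an exterior point is continuous in the point and the body. But the lemma is used on the full compact square $[0,1]\times[0,2\pi]$ (e.g.\ the bound $|\s|\le c$ in Lemma~\ref{l curvature} needs continuity up to $t=1$), and at $t=1$ the vertex $B_1=B$ lies on $\pl C$, so the ``exterior point'' fact no longer applies; you flag this degeneration but do not resolve it, and it is exactly where the work lies. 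The paper avoids it differently: for fixed $\vphi$, $\s(\cdot,\vphi)$ is monotone and, because the inclination angle of the tangent line sweeps all intermediate values as the tangency point runs along $\pl_-C_\vphi$, its range is an interval, hence it is continuous in $t$ up to $t=1$; joint continuity then follows from separate continuity plus monotonicity in $t$ (a sandwich argument). To finish your version you would need the same ingredient in some form, e.g.\ right-continuity at $r=0^+$ of the right derivative of the convex function $r\mapsto x(r,\vphi)$, combined with the monotone sandwich in $t$ to pass from separate to joint continuity at $t=1$.
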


\begin{proof}
Fix $t$ and cross the cone $K_t$ by the plane perpendicular to the $x$-axis at the distance 1 from the vertex $B_t$ of the cone. The crossing plane is given by $x = \al(t) + 1$. The intersection of $K_t$ with the plane is a convex 2-dimensional body bounded, in the polar coordinates $r,\, \vphi$, by the curve $r = 1/\s(t,\vphi)$. Indeed, the ray from $B_t$ passing through a point $(r,\vphi)$ on the curve, has the slope $r$ with respect to the $x$-ray. On the other hand, it is known that the slope is $1/\s(t,\vphi).$

The function $r = 1/\s(t,\vphi)$ that determines the boundary of the convex 2-dimensional body is continuous in $\vphi$. Moreover, the curve is regular for almost all values of $\vphi$, and at exactly these values the function $\vphi \mapsto \s(t,\vphi)$ admits the derivative $\frac{\pl\s}{\pl\vphi}(t,\vphi)$. It follows that $\s(t,\vphi)$ is continuous in $\vphi$, and $\frac{\pl\s}{\pl\vphi}(t,\vphi)$ exists for almost all values $(t,\vphi)$. Thus, (b) is proved.

     %Let us now prove that $\s(t,\vphi)$ is continuous in $t$. It suffices to prove continuity in $t \in [0,\, \tau]$ for all $0 < \tau < 1$.

      %Take $0 \le t_1 < t_2 \le \tau$ and denote for brevity $\s_1 = \s(t_1,\vphi)$,\, $\s_2 = \s(t_2,\vphi)$,\, $x_2 = x(t_2,\vphi)$,\, $r_2 = r(t_2,\vphi)$; see Fig.~\ref{fig aux}. One has $\s_2 = {(x_2 - \al(t_2))}/{r_2}$ and $\s_1 \le {(x_2 - \al(t_1))}/{r_2}$, hence
     %$$0 \le \s_1 - \s_2 \le \frac{\al(t_2) - \al(t_1)}{r_2} \le \frac{\al(t_2) - \al(t_1)}{r(\tau,\vphi)}.$$

   %nput{fig8}

     %Take a value $t \in [0,\, \tau).$ In the limit below one can assume that the variable $t'$ also lies in $[0,\, \tau).$ One has $$\lim_{t' \to t} |\s(t',\vphi) - \s(t,\vphi)| \le \lim_{t' \to t} \frac{|\al(t') - \al(t)|}{r(\tau,\vphi)} = 0,$$   hence $\s$ is continuous in $t$.

Draw the tangent lines in $\Pi_\vphi$ to the curve $\pl_- C_\vphi$ at its endpoints $B$ and $M_\vphi$, and fix the corresponding angles of inclination. When the point of tangency passes all points of the curve from $B$ to $M_\vphi$, the angle of inclination of the corresponding tangent line takes all intermediate values. Correspondingly, the value $\s(t,\vphi)$ takes all values intermediate between $\s(0,\vphi)$ and $\s(1,\vphi)$. It follows that the function $\s(t,\vphi)$ is continuous in $t$.

Now having that $\s$ is continuous in each of the variables $\vphi$ and $t$ and monotone in $t$, it is not difficult to conclude that $\s$ is continuous in both variables. Thus, (a) is also proved.
\end{proof}

\begin{lemma}\label{l curvature}
For any $0 < \tau < 1$ there exists $\gam = \gam(\tau) > 0$ such that for all $\vphi$ and $0 \le t_1 < t_2 \le \tau$,
\beq\label{Lip}
r(t_2,\vphi) - r(t_1,\vphi) \le -\gam\, (t_2 - t_1).
\eeq
\end{lemma}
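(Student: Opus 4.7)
The plan is to use the tangent-line identity $\al(t)=x(r(t,\vphi),\vphi)-r(t,\vphi)\,\s(t,\vphi)$ together with the curvature bound of condition (ii)$'$ to derive a uniform one-sided Lipschitz estimate. Write $r_i=r(t_i,\vphi)$ and $\s_i=\s(t_i,\vphi)$. Since $0\le t_1<t_2\le\tau<1$, the points $M_{t_1,\vphi}$ and $M_{t_2,\vphi}$ both lie on the $C^1$ arc of $\pl_- C_\vphi$ between $M_{\tau,\vphi}$ and $M_\vphi$, on which the curvature is at most $k=k(\tau)$; strict monotonicity of $r(\cdot,\vphi)$ gives $r_1>r_2>0$, and convexity of $r\mapsto x(r,\vphi)$ gives $\s_1\ge\s_2$ with $\s_2\le\pl x/\pl r(r,\vphi)\le\s_1$ for $r\in[r_2,r_1]$.

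The first step is to prove
\[
\al(t_2)-\al(t_1)\le r_1(\s_1-\s_2).
\]
Subtracting the tangent-line identities and writing $x(r_1,\vphi)-x(r_2,\vphi)=\int_{r_2}^{r_1}\pl x/\pl r\,dr$ rearranges the difference into $\int_{r_2}^{r_1}[\s_1-\pl x/\pl r]\,dr+r_2(\s_1-\s_2)$; bounding the non-negative integrand by $\s_1-\s_2$ collapses the right side to $r_1(\s_1-\s_2)$.

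The second step is to prove
\[
\s_1-\s_2\le k(\tau)\,(1+S(\tau)^2)^{3/2}(r_1-r_2),
\]
where $S(\tau):=\max_{[0,\tau]\times S^1}|\s|<\infty$ by Lemma \ref{l sigma}(a) and compactness. Let $\theta_i$ be the angle of inclination of the tangent to $\pl_- C_\vphi$ at $M_{t_i,\vphi}$, so that $\s_i=\cot\theta_i$. The arc length between the two points equals $\int_{r_2}^{r_1}\sqrt{1+\s^2}\,dr\le\sqrt{1+S^2}(r_1-r_2)$; the curvature hypothesis then gives $|\theta_1-\theta_2|\le k\sqrt{1+S^2}(r_1-r_2)$; and since the $\theta_i$ lie in a range where $\csc^2\theta\le 1+S^2$, the mean value theorem applied to $\cot$ delivers the claimed bound on $\s_1-\s_2$.

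Combining the two estimates with $\al'\ge\al'_{\min}>0$ on the compact interval $[0,1]$ and the compactness bound $r_1\le R:=\max_{\vphi}r(0,\vphi)<\infty$ yields
\[
r_1-r_2\ge\frac{\al'_{\min}}{R\,k(\tau)(1+S(\tau)^2)^{3/2}}\,(t_2-t_1),
\]
which is the stated inequality. The main technical obstacle is the curvature-to-$\s$ conversion: condition (ii)$'$ provides only a Lipschitz bound on the tangent angle in arc length rather than a pointwise bound on $d\s/dr$, so the conversion must be carried out in its integrated form, taking care to uniformly control $|\s|$ (to bound $\csc^2\theta$) and the $ds/dr$ factor via the same uniform bound $S(\tau)$.
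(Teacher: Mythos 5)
Your proof is correct and follows essentially the same route as the paper's: bound $\al(t_2)-\al(t_1)$ above by $r_1(\s_1-\s_2)$, convert $\s_1-\s_2$ to the tangent-angle difference via the mean value theorem for $\cot$ with the uniform bound on $|\s|$, invoke the curvature hypothesis through the arclength estimate $\le\sqrt{1+c^2}\,(r_1-r_2)$, and finish with $\al'\ge\min\al'>0$ and a uniform bound on $r_1$. The only cosmetic differences are that the paper obtains the first inequality geometrically (a line through $M_{t_1,\vphi}$ parallel to the support line at $M_{t_2,\vphi}$) where you integrate $\pl x/\pl r$, and it uses $\mathrm{diam}(C)$ in place of your $R$.
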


\begin{proof}
The continuous function $\s(t,\vphi)$ is defined on the compact set $[0,\, 1] \times [0,\, 2\pi]$, and therefore, $|\s(t,\vphi)| \le c$ for a certain constant $c.$ It follows that for any $\vphi$ the cosine of the angle of inclination, with respect to the $r$-axis, of the curve $\pl_- C_\vphi$ in $\Pi_\vphi$ is greater than or equal to $1/\sqrt{1 + c^2}$.

Take $0 \le t_1 < t_2 \le \tau$ and introduce the shorthand notation $r_1 := r(t_1,\vphi), \ r_2 := r(t_2,\vphi), \ \psi_1 := \text{arccot}(\s(t_1,\vphi)), \ \psi_2 := \text{arccot}(\s(t_2,\vphi))$. We have $t_1 < t_2$, therefore $r_1 > r_2$.  The length of the part of curve between the points $M_{t_1,\vphi} = (x(r_1), r_1)$ and $M_{t_2,\vphi} = (x(r_2), r_2)$ does not exceed $\sqrt{1 + c^2} (r_1 - r_2)$, hence the angle between the tangent lines at these points does not exceed $k\sqrt{1 + c^2} (r_1 - r_2)$, that is,
$$
0 \le \psi_2 - \psi_1 \le k\sqrt{1 + c^2} (r_1 - r_2).
$$

Draw the line through the point $(x(r_1), r_1)$ parallel to the tangent line at $(x(r_2), r_2)$, and let $\bar\al$ be the $x$-coordinate of intersection of this line with the $x$-axis; see Fig.~\ref{fig curv}. Since the curve is concave, we have $\bar\al \ge \al(t_2)$.

     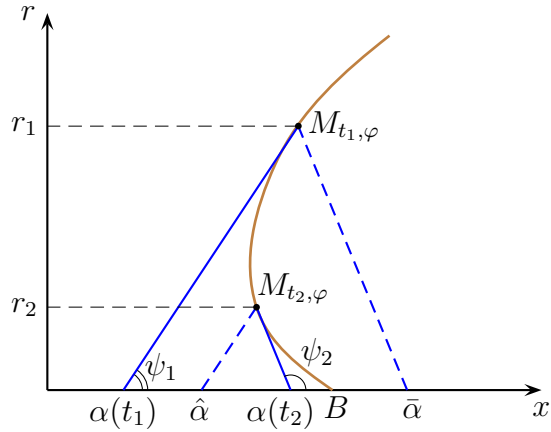
\begin{figure}[h]
\begin{picture}(0,170)
\scalebox{1}{
\rput(3.6,0.75){
\psline[arrows=->,arrowscale=1.5](0,0)(6.5,0)
\rput(6.5,-0.25){$x$}
\psline[arrows=->,arrowscale=1.5](0,0)(0,5)
\rput(-0.25,5){$r$}
\psecurve[linecolor=brown,linewidth=1pt](5,-0.5)(3.75,0)(2.75,1.1)(3,3)(4.5,4.7)(6,5.5)
\psline[linecolor=blue](1,0)(3.3,3.5)
            \psarc[linewidth=0.4pt](1,0){0.25}{0}{55}
                        \psarc[linewidth=0.4pt](1,0){0.32}{0}{55}
            \rput(1.5,0.3){$\psi_1$}
                       \rput(3.9,3.5){$M_{t_1,\vphi}$}
\psline[linecolor=blue](3.2,0)(2.75,1.1)
           \psarc[linewidth=0.4pt](3.2,0){0.2}{0}{115}
           \rput(3.55,0.5){$\psi_2$}
                      \rput(3.23,1.35){$M_{t_2,\vphi}$}
                      \rput(3.8,-0.225){$B$}
\psline[linecolor=blue,linestyle=dashed](4.732,0)(3.3,3.5)
   \psline[linecolor=blue,linestyle=dashed](2.027,0)(2.75,1.1)
   \rput(2,-0.25){$\hat\al$}
\rput(1,-0.3){$\al(t_1)$}
\rput(3.07,-0.3){$\al(t_2)$}
\rput(4.8,-0.25){$\bar\al$}
\psline[linewidth=0.3pt,linestyle=dashed](2.75,1.1)(0,1.1)
\rput(-0.3,1.1){$r_2$}
\psline[linewidth=0.3pt,linestyle=dashed](3.3,3.5)(0,3.5)
\rput(-0.3,3.5){$r_1$}
\psdots[dotsize=2.5pt](3.3,3.5)(2.75,1.1)
}}
\end{picture}
\caption{Auxiliary construction to Lemmae \ref{l curvature} and \ref{l conditions}.}
\label{fig curv}
\end{figure} 

Thus, we obtain
$$
\al(t_2) - \al(t_1) \le \bar\al - \al(t_1) = r_1\, \big( \s(t_1,\vphi) - \s(t_2,\vphi) \big) = r_1 (\cot\psi_1 - \cot\psi_2).
$$
Further, taking into account that the points $M_{t_1,\vphi} = (x(r_1), r_1)$ and $B = (\al(1), 0)$ are contained in $C$, we have
$$
r_1 \le \text{dist}\big( (x(r_1), r_1),\, (\al(1), 0) \big) \le \text{diam}(C),
$$
where diam$(C)$ means the diameter of $C$, and for some intermediate value $\psi_0 \in (\psi_1,\, \psi_2)$,
$$
\cot\psi_1 - \cot\psi_2 = \cot' \psi_0\, (\psi_1 - \psi_2) = (1 + \cot^2 \psi_0)\,  (\psi_2 - \psi_1) \le (1 + c^2)\, (\psi_2 - \psi_1).
$$

Finally, we have
$$
t_2 - t_1 \le \frac{\al(t_2) - \al(t_1)}{\min_{t \in [0,\tau]} \al'(t)} \le \frac{r_1 (\cot\psi_1 - \cot\psi_2)}{\min_{t \in [0,\tau]} \al'(t)}
$$
$$
\le \frac{\text{diam}(C)\, (1 + c^2)\, (\psi_2 - \psi_1)}{\min_{t \in [0,\tau]} \al'(t)} \le \frac{\text{diam}(C)\, (1 + c^2)\, k\sqrt{1 + c^2}}{\min_{t \in [0,\tau]} \al'(t)}\, \big(r_1 - r_2\big).
$$
It follows that the statement of the lemma holds true for
$$
\gam = \frac{\min_{t \in [0,\tau]} \al'(t)}{\text{diam}(C)\, k\, (1 + c^2)^{3/2}}.
$$
\end{proof}

Using Lemma \ref{l curvature}, one easily obtains the following statement.

\begin{corollary}\label{cor l curv}
Consider the map $g : (t, \vphi) \mapsto (r(t, \vphi), \vphi)$ from $[0,\, 1] \times [0,\, 2\pi)$ to $\RR^2$.
If a planar set $A$ has Lebesgue measure 0, then $g^{-1}(A)$ also has Lebesgue measure 0.
\end{corollary}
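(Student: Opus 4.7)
My plan is to use Lemma \ref{l curvature} to show that, for each fixed $\vphi$, the slice map $t \mapsto r(t, \vphi)$ has a Lipschitz inverse on its image, and then combine these slicewise estimates via Fubini's theorem. Fix $\tau \in (0, 1)$: Lemma \ref{l curvature} supplies $\gam(\tau) > 0$ with $r(t_1, \vphi) - r(t_2, \vphi) \ge \gam(\tau)\,(t_2 - t_1)$ for every $\vphi$ and every $0 \le t_1 < t_2 \le \tau$. Consequently $f_\vphi(t) := r(t, \vphi)$ is strictly decreasing on $[0, \tau]$, and its (set-theoretic) inverse $f_\vphi^{-1}$, defined on the image $f_\vphi([0, \tau])$, is Lipschitz with constant $1/\gam(\tau)$. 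Since Lipschitz maps send one-dimensional null sets to null sets, for any $B \subset \RR$ with $|B| = 0$ the set $\{t \in [0, \tau] : r(t,\vphi) \in B\}$ has Lebesgue measure zero.

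Now, given a planar null set $A \subset \RR^2$, I would let $A_\vphi := \{r : (r, \vphi) \in A\}$ denote its $\vphi$-slice; by Fubini, $|A_\vphi| = 0$ for almost every $\vphi$. The $\vphi$-slice of $g^{-1}(A)$ is precisely $\{t : r(t,\vphi) \in A_\vphi\}$, which by the preceding step has linear measure zero on $[0, \tau]$ for a.e.\ $\vphi$. Applying Fubini in the opposite direction yields $|g^{-1}(A) \cap ([0, \tau] \times [0, 2\pi))| = 0$. Taking $\tau = 1 - 1/n$ and letting $n \to \infty$, together with the fact that the residual line $\{1\} \times [0, 2\pi)$ is trivially planar-null, yields $|g^{-1}(A)| = 0$.

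The one subtlety I expect is measurability of $g^{-1}(A)$ (needed to invoke Fubini). I would handle it by first enclosing $A$ in a Borel null set $A' \supset A$, noting that $g$ is Borel measurable (this follows from the joint continuity of $\s$ in Lemma \ref{l sigma} together with the implicit relation tying $r$ to $\s$ and $\al$), so $g^{-1}(A')$ is Borel and the Fubini argument applies to $A'$ verbatim; completeness of Lebesgue measure then transfers the conclusion from $A'$ to $A$. The Lipschitz-inverse estimate is the only substantive ingredient; everything else is routine measure theory.
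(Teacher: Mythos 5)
Your proposal is correct and follows essentially the same route as the paper: the key ingredient is the bi-Lipschitz-type estimate of Lemma \ref{l curvature} on $[0,\,\tau]\times[0,\,2\pi)$, combined with exhaustion by $\tau_k \to 1$ and the observation that the leftover line $\{1\}\times[0,\,2\pi)$ is planar-null. The only difference is one of presentation: the paper compresses the slicewise Fubini argument into the single inequality $|g^{-1}(A)| \le \frac{1}{\gam(\tau)}|A|$ for $A \subset g([0,\,\tau]\times[0,\,2\pi))$, whereas you spell out the slice-by-slice Lipschitz inverse, the Fubini step, and the measurability of $g^{-1}(A)$ explicitly.
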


\begin{proof}
Let $0 < \tau < 1$, and take a Borel set $A \subset g([0,\, \tau] \times [0,\, 2\pi))$. Using that $|r(t_2, \vphi) - r(t_1, \vphi)| \ge \gam(\tau) |t_2 - t_1|$ for all $\vphi$ and $t_1,\, t_2 \in [0,\, \tau]$, one concludes that $|g^{-1}(A)| \le \frac{1}{\gam(\tau)}\, |A|$. In particular, if $|A| = 0$ then $|g^{-1}(A)| = 0$.

Let now $A \subset \RR^2$ have measure 0. Then for all $0 < \tau < 1$ the set $g^{-1} \big( A \cap g([0,\, \tau] \times [0,\, 2\pi)) \big)$ also has measure 0. Taking $0 < \tau_k < 1$,\, $k \in \mathbb{N}$ and $\tau_k \to 1$ as $k \to \infty$, one has
$$
g^{-1}(A) = \cup_{k\in\mathbb{N}} g^{-1} \big( A \cap g([0,\, \tau_k] \times [0,\, 2\pi)) \big) \cup \big( \{1\} \times [0,\, 2\pi) \big);
$$
thus, $g^{-1}(A)$ is the union of countably many sets of Lebesgue measure 0, and therefore, $|g^{-1}(A)| = 0$.
\end{proof}

Denote by $K_t(\vphi)$ the 2-dimensional cone in $\Pi_\vphi$ bounded below by the $x$-axis and above by the ray of support through $B_t$; see Fig.~\ref{fig cone}. Clearly, $K_t(\vphi) = K_t \cap \Pi_\vphi$, and $K_t(\vphi)$ is circumscribed about $C_\vphi$. In the $(x,r)$-coordinate system one has
$$
K_t(\vphi) = \{ (x,r) : r \ge 0,\ x \ge \s(t,\vphi) r + \al(t) \}.
$$
The body $C_\vphi$ can be represented in terms of the 2-dimensional circumscribed cones as
$$
C_\vphi = \big( \cap_{0 \le t \le 1} K_t(\vphi) \big) \cap \text{Conv} \big( C_\vphi,\, \{O\} \big).
$$

  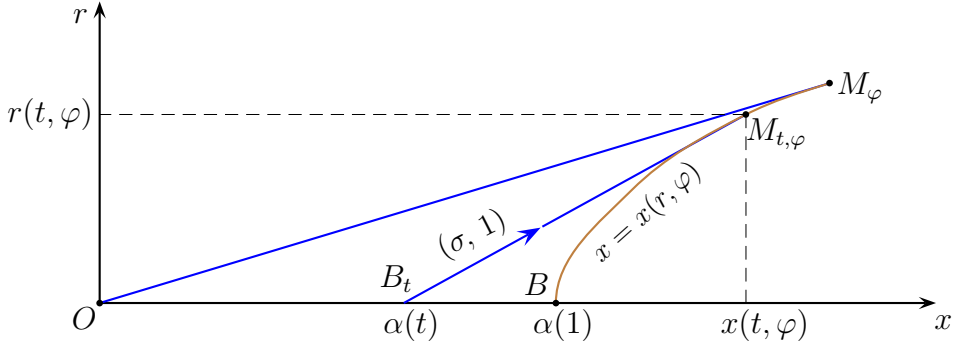
\begin{figure}[h]
\begin{picture}(0,150)
\scalebox{1}{
\rput(7.25,0.75){
\psline[arrows=->,arrowscale=1.5](-6,0)(5,0)
\psline[arrows=->,arrowscale=1.5](-6,0)(-6,4)
           \psline[linecolor=blue](-6,0)(3.6,2.915)
    \psline[linecolor=blue,arrows=->,arrowscale=2](-2,0)(-0.2,1)
    \psline[linecolor=blue](-0.1775,1.0125)(2.5,2.5)
\psline[linewidth=0.3pt,linestyle=dashed](2.5,2.5)(-6,2.5)
\psline[linewidth=0.3pt,linestyle=dashed](2.5,2.5)(2.5,0)
\rput(5.1,-0.25){$x$}
\rput(-6.25,3.8){$r$}
   %\rput(-5.9,-0.3){$-\al(1)$}
\rput(-1.9,-0.3){$\al(t)$}
\rput(0.1,-0.3){$\al(1)$}
\rput(-6.2,-0.2){$O$}
\rput(-2.1,0.32){$B_t$}
\rput(-0.25,0.25){$B$}
\rput(-6.65,2.5){$r(t,\vphi)$}
\rput(2.75,-0.3){$x(t,\vphi)$}
\rput{27}(-1.1,0.9){\scalebox{0.95}{$(\s,\, 1)$}}
\rput{42}(1.2,1.2){\scalebox{0.9}{$x=x(r,\vphi)$}}
\rput(4,2.85){$M_\vphi$}
\rput(2.9,2.25){$M_{t,\vphi}$}
\psecurve[linecolor=brown](1,-1.5)(0,0)(1,1.5)(2.5,2.5)(3.6,2.915)(4.5,3.15)
\psdots[dotsize=2.5pt](0,0)(-6,0)(2.5,2.5)(3.6,2.915)
}}
\end{picture}
\caption{The cone $K_t(\vphi)$ is bounded below by the $x$-axis and above by the ray $B_t M_{t,\vphi}$.}
\label{fig cone}
\end{figure} 

Using the definitions of the functions $\al,\, x,\, t,\, \s$, one readily obtains the formulas
$$
x(r,\vphi) = \s(t(r,\vphi),\vphi)\, r + \al(t(r,\vphi)) \qquad \text{and} \qquad \frac{\pl x}{\pl r}(r,\vphi) = \s(t(r,\vphi),\vphi).
$$
In terms of the inverse function $t \mapsto r(t,\vphi)$ we have
\beq\label{formulas}
x(r(t,\vphi),\vphi) = \s(t,\vphi)\, r(t,\vphi) + \al(t), \qquad \frac{\pl x}{\pl r}(r(t,\vphi),\vphi) = \s(t,\vphi).
\eeq
If all functions in \eqref{formulas} are continuously differentiable, one differentiates by $\vphi$ both parts in the former formula in \eqref{formulas} to obtain
$\frac{\pl x}{\pl r}\, \frac{\pl r}{\pl\vphi} + \frac{\pl x}{\pl\vphi} = \s\, \frac{\pl r}{\pl\vphi} + r\, \frac{\pl\s}{\pl\vphi},$
and using the latter formula in \eqref{formulas} one gets
\beq\label{key formula}
\frac{1}{r(t,\vphi)}\, \frac{\pl x}{\pl\vphi}(r(t,\vphi),\vphi) = \frac{\pl\s}{\pl\vphi}(t,\vphi).
\eeq
  %and $$ \frac{1}{r}\, \frac{\pl x}{\pl\vphi}(r,\vphi) = \frac{\pl\s}{\pl\vphi}(t(r,\vphi),\vphi).$$
Since differentiability is not assumed a priori, we need some additional effort to justify this formula. We shall prove that \eqref{key formula} holds for almost all $(t,\vphi) \in [0,\, 1] \times [0,\, 2\pi)$.

For the sake of brevity, in the next paragraph we introduce the shorthand notation $r = r(t,\vphi)$,\, $x = x(r,\vphi)$,\, $\Del x = x(r,\vphi + \Del\vphi) - x(r,\vphi)$, and $\Del\s = \s(r,\vphi + \Del\vphi) - \s(r,\vphi)$ for $\Del\vphi \in \RR$.

We have
\beq\label{f1}
\s(t,\vphi) = \frac{x - \al(t)}{r}.
\eeq
On the other hand, the tangent line from the point $(\al(t), 0)$ to the graph of the function $r \mapsto x(r, \vphi+\Del\vphi)$ (which is a concave curve) lies above the ray from $(\al(t), 0)$ through $(x + \Del x,\, r)$; see Fig.~\ref{fig formula}.
     \begin{figure}[h]
\begin{picture}(0,165)
\scalebox{1}{
\rput(2.6,0.5){
\psline[arrows=->,arrowscale=1.5](0,0)(8,0)
\psline[linewidth=0.4pt,linestyle=dashed](0,4)(8,4)
\rput(5,-0.25){$x$}
\rput(6.7,-0.25){$x+\Del x$}
\rput(1,-0.3){$\al(t)$}
\psline[linewidth=0.2pt,linestyle=dashed](5,4)(5,0)
\psline[linewidth=0.2pt,linestyle=dashed](6.5,4)(6.5,0)
\psline[arrows=->,arrowscale=1.5](0,0)(0,5)
\rput(-0.25,4){$r$}
\psecurve[linecolor=brown](3.1,-1)(3,0)(3.3,1.75)(4.1,3)(5,4)(5.65,4.5)(6.5,5)
\psecurve[linecolor=brown](2.7,-1)(3,0)(3.5,1)(4.5,2.5)(5.6,3.5)(6.5,4)(7.5,4.4)(8.5,4.75)
\psline[linecolor=blue](1,0)(6,5)
\psline[linecolor=blue](1,0)(7.6,4.8)
\psdots[dotsize=2.5pt](5,4)(6.5,4)
\rput{45}(4.55,3.15){\scalebox{0.875}{$x=x(r,\vphi)$}}
\rput{57}(4,1.3){\scalebox{0.8}{$x=x(r,\vphi+\Del\vphi)$}}
}}
\end{picture}
\caption{Auxiliary construction to the proof of formula \eqref{key formula}.}
\label{fig formula}
\end{figure}
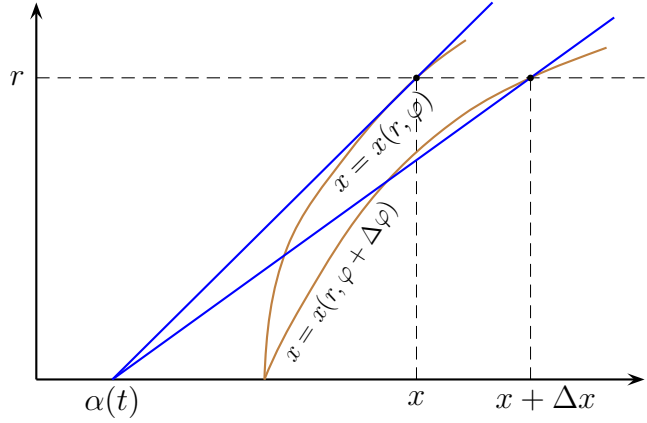 
It follows that
\beq\label{f2}
\s(t,\vphi+\Del\vphi) \le \frac{x + \Del x - \al(t)}{r}.
\eeq
From \eqref{f1} and \eqref{f2} one derives that
$$
\frac{\Del\s}{\Del\vphi} \le \frac{1}{r}\, \frac{\Del x}{\Del\vphi} \ \ \text{ for } \ \Del\vphi > 0 \qquad \text{and} \qquad \frac{\Del\s}{\Del\vphi} \ge \frac{1}{r}\, \frac{\Del x}{\Del\vphi} \ \ \text{ for } \ \Del\vphi < 0.
$$
Going to the limit $\Del\vphi \to 0$, one obtains formula \eqref{key formula}, provided that the partial derivatives in both sides of \eqref{key formula} exist.

The statement (b) of Lemma \ref{l sigma} guarantees that the right hand side of \eqref{key formula} exists for almost all $(t,\vphi) \in [0,\, 1] \times [0,\, 2\pi)$.

Note that $\pl_- C$ is the graph of a convex function $x$ of variables $y,\,z$. In polar coordinates $y = r\cos\vphi,\, z = r\sin\vphi$ this function takes the form $(r,\, \vphi) \mapsto x(r,\vphi).$ For almost all $(r,\vphi)$ the corresponding point of $\pl_- C$ is regular, and therefore, the partial derivative $\frac{\pl x}{\pl\vphi}(r,\vphi)$ exists. In other words, the set $A = \{ (r,\vphi) : \,\frac{\pl x}{\pl\vphi}(r,\vphi) \ \text{does not exist} \}$ has Lebesgue measure 0, and using Corollary \ref{cor l curv} one concludes that the set $g^{-1}(A) = \{ (t,\vphi) : \,\frac{\pl x}{\pl\vphi}(r(t,\vphi),\vphi) \ \text{does not exist} \}$ also has Lebesgue measure 0. Thus, the left hand side of \eqref{key formula} also exists for almost all $(t,\vphi)$.

\begin{lemma}\label{l ineq}
For all $\vphi$ and all $t_1$,\, $t_2$,
\beq\label{ineq0}
r(t_2,\vphi) \big[ \s(t_1,\vphi) - \s(t_2,\vphi) \big] \le \al(t_2) - \al(t_1) \le r(t_1,\vphi) \big[ \s(t_1,\vphi) - \s(t_2,\vphi) \big].
\eeq
\end{lemma}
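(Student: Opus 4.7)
The plan is to derive \eqref{ineq0} directly from convexity of the function $r \mapsto x(r,\vphi)$ together with the identities \eqref{formulas}. Writing $r_i := r(t_i,\vphi)$ for $i=1,2$, the supporting (tangent) line to the graph of $x(\cdot,\vphi)$ at $r_i$ in $\Pi_\vphi$ has inverse slope $\s(t_i,\vphi)$ and, by the first formula in \eqref{formulas}, meets the $x$-axis at $\al(t_i)$; hence its equation is $x = \s(t_i,\vphi)\, r + \al(t_i)$. Because $x(\cdot,\vphi)$ is convex (which follows from the fact that $\s(t,\vphi) = \pl x/\pl r$ and $\s$ is monotone in $t$, so monotone in $r$), each such supporting line lies on or below the graph at every $r \ge 0$.

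The proof then amounts to evaluating one supporting line at the other base point. First I evaluate the line based at $r_1$ at the abscissa $r = r_2$: convexity gives $x(r_2,\vphi) \ge \s(t_1,\vphi)\, r_2 + \al(t_1)$. Substituting the identity $x(r_2,\vphi) = \s(t_2,\vphi)\, r_2 + \al(t_2)$ from \eqref{formulas} and rearranging yields the left inequality of \eqref{ineq0}. Symmetrically, evaluating the line based at $r_2$ at the abscissa $r = r_1$, combined with $x(r_1,\vphi) = \s(t_1,\vphi)\, r_1 + \al(t_1)$, produces the right inequality.

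Since the whole argument reduces to the defining convexity inequality of $x(\cdot,\vphi)$ plus the purely algebraic relations \eqref{formulas}, no serious obstacle is expected; in particular, differentiability of $\s$ in $\vphi$ (Lemma \ref{l sigma}(b)) and Corollary \ref{cor l curv} play no role here. The only point worth verifying is that \eqref{formulas} applies for \emph{every} $t_1, t_2 \in [0,\, 1]$, not merely almost every, but this is immediate from the definitions of $r(\cdot,\vphi)$ as a generalized inverse and $\s(\cdot,\vphi)$ as the inverse slope of the tangent line at $M_{t,\vphi}$. The statement holds for arbitrary ordering of $t_1$ and $t_2$: if $t_1 = t_2$ both sides vanish, while if $t_1 \ne t_2$ the same supporting-line inequalities give the conclusion with the correct signs, since $\al$ and $\s(\cdot,\vphi)$ are monotone in opposite directions.
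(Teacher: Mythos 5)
Your proposal is correct and is essentially the paper's own argument: the paper likewise writes the support line through $M_{t_i,\vphi}$ as $x = \s(t_i,\vphi)\, r + \al(t_i)$ and evaluates each line at the other point, using that the convex curve $\pl_- C_\vphi$ lies on one side of its lines of support, which after the substitution $x(r(t_i,\vphi),\vphi) = \s(t_i,\vphi)\, r(t_i,\vphi) + \al(t_i)$ gives exactly your two inequalities. The only cosmetic difference is that the paper takes convexity of the curve as given rather than re-deriving it from monotonicity of $\s$, and it phrases the support property geometrically (points lying to the right of the support rays from $B_{t_i}$) rather than via tangent lines to the graph of $x(\cdot,\vphi)$.
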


\begin{proof}
The curve $\pl_- C_\vphi$ lies below all its lines of support. The lines of support through the points $M_{t_1,\vphi} = (x(t_1,\vphi), r(t_1,\vphi))$ and $M_{t_2,\vphi} = (x(t_2,\vphi), r(t_2,\vphi))$ are, respectively, given by the equations $x = \s(t_1,\vphi)\, r + \al(t_1)$ and $x = \s(t_2,\vphi)\, r + \al(t_2)$ . The point $M_{t_1,\vphi}$ lies to the right of the support line through $M_{t_2,\vphi}$, hence
$$
x(t_1,\vphi) \ge \s(t_2,\vphi)\, r(t_1,\vphi) + \al(t_2),
$$
and therefore,
\beq\label{ineq1}
\s(t_1,\vphi)\, r(t_1,\vphi) + \al(t_1) \ge \s(t_2,\vphi)\, r(t_1,\vphi) + \al(t_2).
\eeq

Similarly, the point $M_{t_2,\vphi}$ lies to the right of the support line through $M_{t_1,\vphi}$, hence
$$
x(t_2,\vphi) \ge \s(t_1,\vphi)\, r(t_2,\vphi) + \al(t_1),
$$
and therefore,
\beq\label{ineq2}
\s(t_2,\vphi)\, r(t_2,\vphi) + \al(t_2) \ge \s(t_1,\vphi)\, r(t_2,\vphi) + \al(t_1).
\eeq

From \eqref{ineq1} and \eqref{ineq2} one obtains the statement of Lemma \ref{l ineq}.
\end{proof}

A plane of support to $C$ through a point $\xi \in \pl C$ divides the space into the closed half-space containing $C$ and the open half-space disjoint with $C$. If $\xi \in \pl_- C$ then $O$ is contained in the half-space disjoint with $C$, and therefore, for a regular point $\xi$,\, $\langle n_\xi,\, \overrightarrow{OP} \rangle < 0$ for any point $P \in C$. Hence this inequality remains valid for all points $P \in K \setminus \{ O \}$.

On the other hand,  if $\xi \in \pl_+ C$ then $O$ is contained in the half-space containing $C$, and therefore, for a regular point $\xi$,\, $\langle n_\xi,\, \overrightarrow{O\xi} \rangle \ge 0$. Taking $P = \xi$, one concludes that the inequality $\langle n_\xi,\, \overrightarrow{OP} \rangle \ge 0$ is valid for a point $P \in K \setminus \{ O \}$.

Denote
\beq\label{Om-}
\Om_- = \Om_-(K) = \big\{ n \in S^2 : \langle n,\, \overrightarrow{OP} \rangle < 0 \ \, \text{for all} \ P \in K \setminus \{ O \} \big\}
\eeq
and
\beq\label{Om+}
\Om_+ = \Om_+(K) = \big\{ n \in S^2 : \langle n,\, \overrightarrow{OP}  \rangle \ge 0 \ \, \text{for some} \ P \in K \setminus \{ O \} \big\}.
\eeq
We have $S^2 = \Om_- \cup \Om_+.$ It follows from the aforementioned argument that
$$
\NNN_C^{-1}(\Om_-) = \pl'_- C \qquad \text{and} \qquad \NNN_C^{-1}(\Om_+) = \pl'_+ C.
$$

             %The following formulas are valid: $$C = \big( \cap_{0 \le t \le 1} K_t \big) \cap \rm{Conv}(C, \{O\})$$
              %and $$C_\vphi = \big( \cap_{0 \le t \le 1} K_t(\vphi) \big) \cap \rm{Conv}(C_\vphi, \{O\}).$$

Designate by $[0,\, 1] \times [0,\, 2\pi)'$ the set of values $(t, \vphi) \in [0,\, 1] \times [0,\, 2\pi)$ that correspond to regular points of $\pl C$. According to Corollary \ref{cor l curv}, $[0,\, 1] \times [0,\, 2\pi)'$ is a full measure subset of $[0,\, 1] \times [0,\, 2\pi)$.

 Define the mapping $H_C : [0,\, 1] \times [0,\, 2\pi)' \to \Om_-$ by $H_C(t,\vphi) = n_{M_{t,\vphi}}$.

The intersection of the curve $\pl_- C_\vphi$ with the ray of support from $B_t$ is a line segment with an endpoint at $M_{t,\vphi}$. A plane of support to $C$ at a point of this segment contains the ray of support, and therefore, is also a plane of support to any other point of the segment. It follows that the points of the segment are either all regular, or singular. In the case of regularity, the normal vectors at the points of the segment coincide with $n_{M_{t,\vphi}}$.

\begin{zam}\label{z normal}
Note that

(a) $(t,\vphi) \in [0,\, 1] \times [0,\, 2\pi)'$ iff the points on the ray $\pl K_t \cap \Pi_\vphi$ are regular points of the cone $K_t$, and

(b) the vector $H_C(t,\vphi)$ coincides with the outer normal to the cone $K_t$ at any point of the ray $\pl K_t \cap \Pi_\vphi$.
\end{zam}

\begin{lemma}\label{l integral}
For any Borel set $D \subset \Om_-$,
\beq\label{integr_form}
\nu_C(D) = \int \!\!\! \int_{H_C^{-1}(D)} \sqrt{1 + \s^2(t,\vphi) + \Big(\frac{\pl\s}{\pl\vphi}(t,\vphi)\Big)^2}\ \frac{1}{2}\, d\big[ -r^2(t,\vphi) \big]  d\vphi.
\eeq
\end{lemma}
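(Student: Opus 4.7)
The plan is to compute $\nu_C(D) = \HHH^2(\NNN_C^{-1}(D))$ in three steps: parametrize $\pl_- C$ as a Lipschitz graph in cylindrical $(r,\vphi)$ coordinates, rewrite the area element in terms of $\s$ and $\pl\s/\pl\vphi$ via formula \eqref{key formula}, and then push the measure $r\, dr\, d\vphi$ forward to $(t,\vphi)$-space through $r \mapsto t(r,\vphi)$.

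Since $\pl_- C$ is part of the boundary of the convex body $C$, it is the graph of a convex (hence locally Lipschitz) function $(y,z) \mapsto x$, with $(y, z) = (r\cos\vphi,\, r\sin\vphi)$. The standard area formula, combined with $dy\, dz = r\, dr\, d\vphi$ and $x_y^2 + x_z^2 = x_r^2 + x_\vphi^2/r^2$, gives
$$
\nu_C(D) = \iint_{E(D)} \sqrt{1 + x_r^2 + x_\vphi^2/r^2}\; r\, dr\, d\vphi,
$$
where $E(D)$ is the set of $(r,\vphi)$ whose corresponding regular surface point has outer normal in $D$. By definition $x_r(r, \vphi) = \s(t(r,\vphi), \vphi)$; by the argument immediately following \eqref{key formula}, $\tfrac1r x_\vphi(r, \vphi) = \frac{\pl\s}{\pl\vphi}(t(r,\vphi), \vphi)$ for almost every $(r,\vphi)$. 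Corollary \ref{cor l curv} shows that this a.e.\ identity is preserved after pulling back via $(t,\vphi)\mapsto(r(t,\vphi),\vphi)$, so the integrand equals $\sqrt{1 + \s^2 + (\pl\s/\pl\vphi)^2}$ evaluated at $(t(r,\vphi), \vphi)$.

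For the change of variables, I will push $r\, dr = \tfrac12 d(r^2)$ on $[0, r(0,\vphi)]$ forward under the continuous non-increasing map $r \mapsto t(r,\vphi)$, for each fixed $\vphi$. Because $\{r : t(r,\vphi) \le t_0\} = [r(t_0,\vphi),\, r(0,\vphi)]$ (by the definition of $r(t_0,\vphi)$ as the smallest preimage of $t_0$), the pushforward assigns to $[0, t_0]$ the mass $\tfrac12(r(0,\vphi)^2 - r(t_0,\vphi)^2)$; since $s \mapsto -r^2(s,\vphi)$ is non-decreasing and right-continuous, this equals $\tfrac12\int_{[0, t_0]} d[-r^2(s,\vphi)]$ in the Lebesgue--Stieltjes sense, so the pushforward measure is exactly $\tfrac12\, d[-r^2(t,\vphi)]$. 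The identification $E(D) \leftrightarrow H_C^{-1}(D)$ comes from Remark \ref{z normal}: every regular point of the ray $\pl K_t \cap \Pi_\vphi$ has outer normal $H_C(t,\vphi)$, so $(r,\vphi) \in E(D)$ iff $(t(r,\vphi), \vphi) \in H_C^{-1}(D)$. Substituting into the integral and applying Fubini in $\vphi$ yields \eqref{integr_form}.

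The main technical obstacle is the pushforward step: when $\pl_- C_\vphi$ contains a straight subsegment, $r(\cdot, \vphi)$ has a jump discontinuity at the corresponding value of $t$, so the change of variables cannot be carried out by a naive Jacobian formula. The Lebesgue--Stieltjes interpretation of $d[-r^2(t,\vphi)]$ absorbs each such jump as an atom whose mass is exactly the area contribution of the corresponding flat facet of $\pl_- C$ to $\nu_C(D)$; Corollary \ref{cor l curv} guarantees that the null set where \eqref{key formula} fails does not spoil the final integral.
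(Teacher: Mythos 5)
Your proposal is correct and follows essentially the same route as the paper's proof: the area formula for the graph $x(r,\vphi)$ in cylindrical coordinates, the substitutions $\frac{\pl x}{\pl r}=\s$ and \eqref{key formula}, the identification of the integration domain through Remark \ref{z normal}, and then the change of variables $(r,\vphi)\mapsto (t(r,\vphi),\vphi)$. Your explicit check that the pushforward of $\frac{1}{2}\,d(r^2)$ under $r\mapsto t(r,\vphi)$ is exactly the Stieltjes measure $\frac{1}{2}\,d\big[-r^2(t,\vphi)\big]$ (with atoms absorbing the flat facets) simply spells out in detail the paper's one-line ``change of variables induced by the map $g$''.
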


The expression in the right hand side of \eqref{integr_form} should be understood as a repeated integral, where the interior integral is a Stieltjes integral with the (monotone increasing) generating function $t \mapsto -r^2(t,\vphi)$.

\begin{proof}
Notice that in general, if a measurable subset $\OOO \subset \pl C$ of the surface of a convex body projects injectively on the $yz$-plane then, denoting by pr$_{yz} \OOO$ the projection and by $\theta(y,z)$ the angle of inclination, with respect to the $x$-axis, of the point on $\pl C$ that projects to $(y,z)$, the area of $\OOO$ can be expressed as
$$
|\OOO| = \int\!\!\!\int_{\text{pr}_{yz}\OOO} \frac{1}{|\cos\theta(y,z)|}\, dy\, dz.
$$

The surface $\pl_- C$ can be parameterized by $(r,\vphi) \mapsto  (x(r,\vphi),\, r\cos\vphi,\, r\sin\vphi)$, where $r,\, \vphi$ are the polar coordinates on the $yz$-plane. It is convex, hence the perpendicular vector to the surface
 $$
 \Big( r; \, \ \sin\vphi\, \frac{\pl x}{\pl\vphi} - r\cos\vphi\, \frac{\pl x}{\pl r}; \ \, -r\sin\vphi\, \frac{\pl x}{\pl r} - \cos\vphi\, \frac{\pl x}{\pl\vphi} \Big).
$$
 exists almost everywhere and is a measurable function of $(r,\vphi)$.

The cosine of the angle of inclination of this vector with respect to the $x$-axis is $r/\sqrt{r^2 + \big( \frac{\pl x}{\pl\vphi} \big)^2 + r^2 \big( \frac{\pl x}{\pl r} \big)^2}$, and its inverse is
$\sqrt{1 + \big( \frac{\pl x}{\pl r} \big)^2 + \frac{1}{r^2}\, \big( \frac{\pl x}{\pl\vphi} \big)^2 }$. We know that $\frac{\pl x}{\pl r} = \s$, and by formula \eqref{key formula}, almost everywhere $\frac{1}{r}\, \frac{\pl x}{\pl\vphi} = \frac{\pl\s}{\pl\vphi}$, therefore the inverse cosine equals $\sqrt{1 + \s^2 + (\frac{\pl\s}{\pl\vphi})^2}$.
Thus, we obtain the formula in polar coordinates
$$
\nu_C(D) = \int\!\!\!\int \sqrt{1 + \s^2(t(r,\vphi),\vphi) + \Big(\frac{\pl\s}{\pl\vphi}(t(r,\vphi),\vphi)\Big)^2}\ r\, dr\, d\vphi,
$$
where the integration is taken over the set $\{ (r,\vphi) : \, H_C(t(r,\vphi), \vphi) \in D \}$.

Making the change of variables induced by the map $g(t,\vphi) = (r(t,\vphi),\vphi)$, one obtains formula \eqref{integr_form}.
\end{proof}

Take a positive $C^1$ function $\eta(t),\ t \in [0,\, 1]$ with $\eta(0) = 1$, and introduce the auxiliary functions
$$
\tilde r(t,\vphi) = \eta(t) r(t,\vphi)
$$
and
\beq\label{tilde al}
\tilde\al(t) = \int_0^t \eta(\xi)\, d\al(\xi).
\eeq

One obviously has $\tilde r(0,\vphi) = r(0,\vphi)$ and $\tilde r(1,\vphi) = 0$.

\begin{opr}\label{o admissible}
The function $\eta$ is called {\it admissible}, if

(a) for any $0 < \tau < 1$ there exists $\tilde\gam = \tilde\gam(\tau) > 0$ such that
$$\tilde r(t_2,\vphi) - \tilde r(t_1,\vphi) \le -\tilde\gam\, (t_2 - t_1)$$
for all $0 \le t_1 < t_2 \le \tau$ and for all $\vphi$;

(b) $\tilde\al(1) < |OB'|$.
\end{opr}

The following lemma guarantees that there is a huge variety of admissible functions.

\begin{lemma}\label{l eta admissible}
For all $0 < T < 1$ there exists $c = c(T) > 0$ such that any function of the form $\eta(t) = e^{\chi(t)}$ with $\chi$ of class $C^1$,\, $\chi(0) = 0, \ |\chi'(t)| \le c$ for $0 \le t \le T$ and $\chi(t) = \chi(T)$ for $T < t \le 1$ is admissible.
\end{lemma}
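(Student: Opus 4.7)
The plan is to choose $c = c(T) > 0$ small enough that both defining conditions of admissibility hold for every $\chi$ in the allowed class, exploiting the uniform pointwise bounds $e^{-cT} \le \eta \le e^{cT}$ and $|\eta'| \le c\, e^{cT}$ on $[0,T]$ (together with $\eta$ being constant on $[T,1]$), which all follow from $\chi(0)=0$, $|\chi'|\le c$, and $\eta' = \chi'\eta$.

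Condition (b) is the easier one. Since $\eta \le e^{cT}$ pointwise and $\al(1) = |OB|$ (recall $O$ is the origin and $B_1 = B$ on the $x$-axis), one has $\tilde\al(1) \le e^{cT}\,|OB|$. The points $O$, $B$, $B'$ lie in this order on $l$ --- the semi-open segment $[O,B)$ is outside $C$ while $[B,B']$ is contained in $C$ --- so $|OB| < |OB'|$, and condition (b) holds whenever $c < T^{-1}\log(|OB'|/|OB|)$.

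For condition (a) I would exploit the product-rule decomposition
\[
\tilde r(t_2,\vphi) - \tilde r(t_1,\vphi) = \eta(t_2)\bigl[r(t_2,\vphi) - r(t_1,\vphi)\bigr] + r(t_1,\vphi)\bigl[\eta(t_2) - \eta(t_1)\bigr].
\]
If $[t_1,t_2] \subset [0,T]$, then Lemma~\ref{l curvature} applied with $\tau = T$, together with $\eta(t_2) \ge e^{-cT}$, $r \le \text{diam}(C)$, and the monotonicity $\gam(\tau)\ge\gam(T)$ for $\tau\le T$ (evident from the explicit formula for $\gam$ at the end of the proof of Lemma~\ref{l curvature}), yields
\[
\tilde r(t_2,\vphi) - \tilde r(t_1,\vphi) \le -\bigl(e^{-cT}\gam(T) - c\, e^{cT}\, \text{diam}(C)\bigr)(t_2-t_1);
\]
choosing $c$ so the parenthesized quantity is strictly positive gives a uniform constant $\tilde\gam_0 > 0$ valid on $[0,T]$. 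On the complementary range $[t_1,t_2] \subset [T,\tau]$, $\eta$ is the constant $e^{\chi(T)}\ge e^{-cT}$, so the second summand vanishes and the estimate reduces directly to $-e^{-cT}\gam(\tau)(t_2-t_1)$. The mixed case $t_1<T<t_2$ is then handled by splitting $[t_1,t_2]$ at $T$ and taking the minimum of the two bounds; thus $\tilde\gam(\tau):=\min\bigl(\tilde\gam_0,\,e^{-cT}\gam(\tau)\bigr)$ witnesses condition (a).

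The only genuine obstacle is the simultaneous calibration of $c$: it must satisfy both $c < T^{-1}\log(|OB'|/|OB|)$ and $c\, e^{2cT}\,\text{diam}(C) < \gam(T)$. Both constraints hold as $c\to 0^+$, so taking $c(T)$ strictly below the minimum of the two thresholds produces a valid choice; this calibration depends on $T$ through $\gam(T)$ and on the fixed geometry of $C$, but not on $\chi$, so it uniformly handles the entire class of admissible exponents described in the lemma.
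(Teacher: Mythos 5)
Your proof is correct, and for condition (a) it takes a genuinely different and more elementary route than the paper. The paper works with the logarithmic derivative of $t \mapsto \eta(t)\,r(t,\vphi)$: it bounds $\eta'/\eta + \frac{\pl r}{\pl t}/r \le -c$ at points of differentiability and then must integrate this up, which forces a Lebesgue decomposition of the BV function $\eta\, r$ into absolutely continuous, singular and jump parts, a covering of the singular set by intervals of small total length, an auxiliary piecewise-affine function $\hat\eta$, and a final limiting argument when $t_1$ or $t_2$ is a singular value; the payoff is the multiplicative estimate $\ln\tilde r(t_2,\vphi) - \ln\tilde r(t_1,\vphi) \le -c\,(t_2-t_1)$, from which the Lipschitz-type bound of Definition \ref{o admissible}(a) is extracted. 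You instead use the exact two-point identity $\tilde r(t_2,\vphi) - \tilde r(t_1,\vphi) = \eta(t_2)\bigl[r(t_2,\vphi)-r(t_1,\vphi)\bigr] + r(t_1,\vphi)\bigl[\eta(t_2)-\eta(t_1)\bigr]$, estimate the first term by Lemma \ref{l curvature} with $\tau = T$ together with $\eta(t_2)\ge e^{-cT}$, and the second by $r \le \mathrm{diam}(C)$ and the mean value theorem for the $C^1$ function $\eta$ (so $|\eta(t_2)-\eta(t_1)| \le c\,e^{cT}(t_2-t_1)$); since Lemma \ref{l curvature} is already a two-point inequality requiring no regularity of $r(\cdot,\vphi)$ beyond monotonicity, all the measure-theoretic machinery (jumps and singular part of $r$) becomes unnecessary, and the constant case on $[T,\tau]$, the splitting at $T$, and the calibration $c\,e^{2cT}\mathrm{diam}(C) < \gam(T)$, $e^{cT} < |OB'|/|OB|$ (the analogue of the paper's $c \le \gam(T)/(2\,\mathrm{diam}(C))$) close the argument; condition (b) you handle exactly as the paper does. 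What you lose is only the stronger exponential-decay statement for $\tilde r$, which the paper never uses beyond deducing (a), so your shorter argument fully suffices; the parenthetical appeal to monotonicity of $\gam(\tau)$ in $\tau$ is not actually needed, since for $\tau \le T$ any pair $t_1 < t_2 \le \tau$ already lies in $[0,T]$ and your uniform constant $\tilde\gam_0$ applies directly.
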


\begin{proof}
Recall that by Lemma \ref{l curvature}, for any $0 < \tau < 1$ there exists $\gam = \gam(\tau) > 0$ such that for all $\vphi$ and $0 \le t_1 < t_2 \le \tau$, \eqref{Lip} is satisfied.

Fix $T$ and take positive $c$ satisfying the inequalities $c \le \gam(T)/(2\,\text{diam}(C))$ and $e^{cT} < |OB'| / |OB|$.

For each $\vphi$ the function $t \mapsto r(t,\vphi)$ is monotone decreasing and therefore is differentiable for almost all $t$. At each point $t \in [0,\, T]$ of differentiability we have $\frac{\pl r}{\pl t}(t,\vphi) \le -\gam(T)$.

Let $\eta$ satisfy the conditions of the lemma. If $r$ is differentiable in $t$ at a certain $t$, then the product $\eta r$ also is, and    %here we consider two cases.
%
%(i) In the case $\tau < t \le 1$ we obtain
%$$
%\frac{\frac{\pl}{\pl t}(\eta(t) r(t,\vphi))}{\eta(t) r(t,\vphi)} = \frac{\frac{\pl}{\pl t}\, r(t,\vphi)}{r(t,\vphi)} \le -\frac{\gam}{\max_{t,\vphi} r(t,\vphi)} \le -\frac{\gam}{\text{diam}(C)} \le -c.
%$$
%
%(ii) In the case $0 \le t \le \tau$
we use the estimates ${\eta'(t)}/{\eta(t)} = \chi'(t) \le c$ and ${\frac{\pl r}{\pl t}(t,\vphi)}/{r(t,\vphi)} \le -{\gam(T)}/{\text{diam}(C)} \le -2c$ to obtain
$$
\frac{\frac{\pl}{\pl t}(\eta(t) r(t,\vphi))}{\eta(t) r(t,\vphi)}  = \frac{\eta'(t)}{\eta(t)} + \frac{\frac{\pl r}{\pl t}(t,\vphi)}{r(t,\vphi)} \le -c.
$$

Of course the function $t \mapsto \eta(t) r(t,\vphi)$ has bounded variation. Additionally, we have just obtained that at each point of differentiability of this function, for $t \in [0,\, T]$,\, ${\frac{\pl}{\pl t}(\eta(t) r(t,\vphi))}/(\eta(t) r(t,\vphi)) \le -c.$
    %We are going to show that it is strictly monotone decreasing.

Take $0 \le t_1 < t_2 \le T$.  % and prove that $\eta(t_1) r(t_1,\vphi) > \eta(t_2) r(t_2,\vphi)$.
We use Lebesgue's decomposition to represent $\eta r$ as the sum of an absolutely continuous function, a singular function, and a jump function. The set of singular values $t$, that is, values at which the derivative of the singular function is not defined and values corresponding to jumps, has measure zero. We first take $t_1$ and $t_2$ not in this set.

Take $\ve > 0$ and consider a covering of the set of singular values $t$ by the union of countably many disjoint open intervals, $J = \cup_i (a_i,\, b_i)$, with the total sum of lengths being smaller than $\ve$, that is, $|J| = \sum_i (b_i - a_i) < \ve$. One can choose $J$ in such a way that it does not contain $t_1$ and $t_2$.

Substitute the function $\eta(\cdot) r(\cdot, \vphi)$ with the auxiliary continuous function $\hat\eta(\cdot)$ which coincides with $\eta(\cdot) r(\cdot, \vphi)$ on $[0,\, 1] \setminus J$ and is affine on each interval $(a_i,\, b_i)$. One easily checks that this function is absolutely continuous and  its derivative on $[0,\, 1] \setminus J$ coincides with the derivative of $\eta(\cdot) r(\cdot, \vphi)$. Hence,
$$
\ln\big(\eta(t_2) r(t_2,\vphi)\big) - \ln\big(\eta(t_1) r(t_1,\vphi)\big) = \ln\big(\hat\eta(t_2)\big) - \ln\big(\hat\eta(t_1)\big) = \int_{t_1}^{t_2} \frac{\hat\eta'(t)}{\hat\eta(t)}\, dt
$$
\beq\label{eq eta}
= \int_{[t_1, t_2] \setminus J} \frac{\frac{\pl}{\pl t}(\eta(t) r(t,\vphi))}{\eta(t) r(t,\vphi)}\, dt + \sum_i \big[ \ln(\eta(b_i) r(b_i,\vphi)) - \ln(\eta(a_i) r(a_i,\vphi)) \big].
\eeq
The integral in \eqref{eq eta} is less than $-c(t_2 - t_1 -\ve)$, and we use the inequalities $r(b_i,\vphi)) < r(a_i,\vphi))$ to estimate the sum in the right hand side of \eqref{eq eta} as follows:
$$
\sum_i \big[ \ln(\eta(b_i) r(b_i,\vphi)) - \ln(\eta(a_i) r(a_i,\vphi)) \big] < \sum_i \big[ \ln(\eta(b_i) - \ln(\eta(a_i) \big]
$$
$$
\le \max_t \frac{\eta'(t)}{\eta(t)}\, \sum_i (b_i - a_i) \le \ve\, c.
$$

Thus, one has $\ln\tilde r(t_2,\vphi) - \ln\tilde r(t_1,\vphi)) = \ln\big(\eta(t_2) r(t_2,\vphi)\big) - \ln\big(\eta(t_1) r(t_1,\vphi)\big)  \le -c(t_2 - t_1) + 2\ve c$, and taking into account that $\ve > 0$ is arbitrary, one obtains $\ln\tilde r(t_2,\vphi) - \ln\tilde r(t_1,\vphi)) \le -c(t_2 - t_1)$, and so,
$$
\tilde r(t_2,\vphi) - \tilde r(t_1,\vphi) \le -\big( e^{c(t_2 - t_1)} - 1 \big) \tilde r(t_2,\vphi) \le - c(t_2 - t_1)\, e^{-cT} \min_\vphi r(T,\vphi).
$$

Now assume that one or both values $t_1$,\, $t_2$ are singular. Since each singular value is a limiting point of nonsingular ones, and using continuity in $t$ of the function $\eta(t) r(t,\vphi)$, by the limiting process we come to the same inequality $\tilde r(t_2,\vphi) - \tilde r(t_1,\vphi) \le - [c\, e^{-cT} \min_\vphi r(T,\vphi)]\, (t_2 - t_1)$.

It remains to note that in the case $T < \tau$, for two values $T \le t_1 < t_2 \le \tau$ we have
$$
\tilde r(t_2,\vphi) - \tilde r(t_1,\vphi) = \eta(T) \big( r(t_2,\vphi) - r(t_1,\vphi) \big) \le -e^{-cT} \gam(\tau)\, (t_2 - t_1).
$$
It follows that condition (a) in the definition of admissibility is verified with the constant $\tilde\gam(\tau) = \min \big\{ c\, e^{-cT} \min_\vphi r(T,\vphi), \ e^{-cT} \gam(\tau) \big\}$.

Further, using that $\al(1) = |OB|$ and $\max_\xi \eta(\xi) \le e^{cT}$, one has
$$
\tilde\al(1) = \int_0^1 \eta(\xi)\, d\al(\xi) \le \max_\xi \eta(\xi)\, \int_0^1 d\al(\xi) \le e^{cT} \cdot \al(1) = e^{cT} |OB| < |OB'|.
$$
Thus, condition (b) is also satisfied.
\end{proof}

In what follows we assume that the function $\eta$ is admissible.

\begin{lemma}\label{l ineq2}
For any $t_1$ and $t_2$ the following formula analogous to \eqref{ineq0} holds
$$
\tilde r(t_2,\vphi) \big( \s(t_1,\vphi) - \s(t_2,\vphi) \big) \le \tilde\al(t_2) - \tilde\al(t_1) \le \tilde r(t_1,\vphi) \big( \s(t_1,\vphi) - \s(t_2,\vphi) \big).
$$
\end{lemma}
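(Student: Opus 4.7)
The plan is to derive the desired weighted inequality from Lemma \ref{l ineq} by a Riemann--Stieltjes approximation argument. I would first observe that the inequality is symmetric under the swap $t_1 \leftrightarrow t_2$ (each of its three expressions changes sign), so there is no loss in assuming $t_1 < t_2$. Throughout, I would use that $\s(\cdot, \vphi)$ is non-increasing in $t$ (so $\s(t_1, \vphi) - \s(t_2, \vphi) \ge 0$) and that $\tilde r(\cdot, \vphi) = \eta(\cdot) r(\cdot, \vphi)$ is continuous and non-increasing on $[0, 1]$ with $\tilde r(1, \vphi) = 0$; the former comes from Lemma \ref{l sigma}, the latter from Lemma \ref{l curvature} combined with condition (a) of admissibility.

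For the upper estimate I would partition $[t_1, t_2]$ as $t_1 = \tau_0 < \tau_1 < \cdots < \tau_n = t_2$ and apply the right half of \eqref{ineq0} on each subinterval:
$$
\al(\tau_{i+1}) - \al(\tau_i) \le r(\tau_i, \vphi)\bigl[\s(\tau_i, \vphi) - \s(\tau_{i+1}, \vphi)\bigr].
$$
Multiplying by $\eta(\tau_i) > 0$ and summing over $i$ yields
$$
\sum_{i=0}^{n-1} \eta(\tau_i)\bigl[\al(\tau_{i+1}) - \al(\tau_i)\bigr] \le \sum_{i=0}^{n-1} \tilde r(\tau_i, \vphi)\bigl[\s(\tau_i, \vphi) - \s(\tau_{i+1}, \vphi)\bigr].
$$
As the mesh tends to zero, the left-hand side is a Riemann--Stieltjes sum converging to $\int_{t_1}^{t_2} \eta(\xi)\, d\al(\xi) = \tilde\al(t_2) - \tilde\al(t_1)$, while the right-hand side converges to $\int_{t_1}^{t_2} \tilde r(t, \vphi)\, d(-\s(t, \vphi))$. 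Since $\tilde r(t, \vphi) \le \tilde r(t_1, \vphi)$ on $[t_1, t_2]$, this latter integral is bounded above by $\tilde r(t_1, \vphi)\bigl[\s(t_1, \vphi) - \s(t_2, \vphi)\bigr]$, giving the upper inequality.

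The lower estimate follows by the mirror argument: apply the left half of \eqref{ineq0} on each subinterval, multiply this time by $\eta(\tau_{i+1})$, and sum. The left-hand side again converges to $\tilde\al(t_2) - \tilde\al(t_1)$, while the right-hand side converges to $\int_{t_1}^{t_2} \tilde r(t, \vphi)\, d(-\s(t, \vphi)) \ge \tilde r(t_2, \vphi)\bigl[\s(t_1, \vphi) - \s(t_2, \vphi)\bigr]$, the inequality using $\tilde r(t, \vphi) \ge \tilde r(t_2, \vphi)$ on $[t_1, t_2]$.

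I do not expect a serious obstacle: the only technical point is the convergence of the two Riemann--Stieltjes sums to their respective integrals, which is standard given that the integrands $\eta$ and $\tilde r(\cdot, \vphi)$ are continuous and the integrators $\al$ and $-\s(\cdot, \vphi)$ are continuous and of bounded variation (both being monotone with bounded range on $[0, 1]$). The conceptual content of the lemma is thus that the pointwise inequality of Lemma \ref{l ineq}, when thought of as a relation between the measures $d\al$ and $r\, d(-\s)$, survives multiplication by the weight $\eta$ followed by integration.
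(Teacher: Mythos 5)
Your proof is correct and follows essentially the same route as the paper: partition $[t_1,\,t_2]$, apply \eqref{ineq0} on each subinterval with the weight $\eta$ evaluated at the appropriate endpoint, and pass to the limit using the monotonicity of $\tilde r(\cdot,\vphi)=\eta(\cdot)r(\cdot,\vphi)$. One small inaccuracy: $\tilde r(\cdot,\vphi)$ need not be continuous in $t$ (the paper only establishes right semicontinuity of $r(\cdot,\vphi)$, and jumps of $\eta(\cdot)r(\cdot,\vphi)$ are explicitly allowed for elsewhere, e.g.\ in the proof of Lemma \ref{l eta admissible}), so your justification of the convergence of $\sum_i \tilde r(\tau_i,\vphi)\big[\s(\tau_i,\vphi)-\s(\tau_{i+1},\vphi)\big]$ to $\int_{t_1}^{t_2}\tilde r\, d(-\s)$ via continuity of the integrand is not quite right. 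This detour is dispensable, however: since $\tilde r(\tau_i,\vphi)\le \tilde r(t_1,\vphi)$ (resp.\ $\ge \tilde r(t_2,\vphi)$) termwise and the $\s$-differences telescope, every such sum is bounded by $\tilde r(t_1,\vphi)\big[\s(t_1,\vphi)-\s(t_2,\vphi)\big]$ from above and $\tilde r(t_2,\vphi)\big[\s(t_1,\vphi)-\s(t_2,\vphi)\big]$ from below before any limit is taken --- exactly as in the paper --- so only the sums $\sum_i\eta(\tau_i)\big[\al(\tau_{i+1})-\al(\tau_i)\big]$ need to converge, which they do because $\eta$ is continuous and $\al$ is $C^1$ and increasing.
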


\begin{proof}
We shall give the proof for the case $t_1 < t_2$; for $t_2 < t_1$ the argument is completely similar.

After suitable substitutions this formula is transformed into
\beq\label{ineq with tilde}
\eta(t_2) r(t_2,\vphi) \big( \s(t_1,\vphi) - \s(t_2,\vphi) \big) \le \int_{t_1}^{t_2} \eta(\xi)\, d\al(\xi) \le \eta(t_1) r(t_1,\vphi) \big( \s(t_1,\vphi) - \s(t_2,\vphi) \big).
\eeq
Consider partitions of $[t_1,\, t_2]$ into subintervals, $\PPP = \{ t_1 = t^{(0)} < t^{(1)} < \ldots < t^{(n-1)} < t^{(n)} = t_2 \}$, and take the limit as $\del(\PPP) := \max_{1 \le i \le n} (t^{(i)} - t^{(i-1)}) \to 0.$ In the formulas below we make use of the left inequality in \eqref{ineq0} and take into account that the function $\eta(t)r(t,\vphi)$ is monotone decreasing in $t.$
$$
\int_{t_1}^{t_2} \eta(\xi)\, d\al(\xi) = \lim_{\del(\PPP)\to 0} \sum_{i=1}^n \eta(t^{(i)}) \big( \al(t^{(i)}) - \al(t^{(i-1)}) \big)
$$
$$
\ge \lim_{\del(\PPP)\to 0} \sum_{i=1}^n \eta(t^{(i)}) r(t^{(i)},\vphi) \big( \s(t^{(i-1)},\vphi) - \s(t^{(i)},\vphi) \big)
$$
$$
\ge \eta(t_2) r(t_2,\vphi) \sum_{i=1}^n \big( \s(t^{(i-1)},\vphi) - \s(t^{(i)},\vphi) \big)  = \eta(t_2) r(t_2,\vphi) \big( \s(t_1,\vphi) - \s(t_2,\vphi) \big).
$$
The left inequality in \eqref{ineq with tilde} is proved.

Now we make use of the right inequality in \eqref{ineq0}.
$$
\int_{t_1}^{t_2} \eta(\xi)\, d\al(\xi) = \lim_{\del(\PPP)\to 0} \sum_{i=1}^n \eta(t^{(i-1)}) \big( \al(t^{(i)}) - \al(t^{(i-1)}) \big)
$$
$$
\le \lim_{\del(\PPP)\to 0} \sum_{i=1}^n \eta(t^{(i-1)}) r(t^{(i-1)},\vphi) \big( \s(t^{(i-1)},\vphi) - \s(t^{(i)},\vphi) \big)
$$
$$
\le \eta(t_1) r(t_1,\vphi) \sum_{i=1}^n \big( \s(t^{(i-1)},\vphi) - \s(t^{(i)},\vphi) \big) = \eta(t_1) r(t_1,\vphi) \big( \s(t_1,\vphi) - \s(t_2,\vphi) \big).
$$
The right inequality in \eqref{ineq with tilde} is proved.
\end{proof}

Fix $\eta$, for any $\vphi$ take the function $t \mapsto \tilde r(t,\vphi) = \eta(t) r(t,\vphi)$, and define the generalized inverse function $r \mapsto \tilde t(r,\vphi)$,\, $r \in [0,\, r(1,\vphi)]$ by the relation $\tilde t(r,\vphi) := \inf\{ t : \tilde r(t,\vphi) \le r \}$. The generalized inverse function $\tilde t(\cdot,\vphi)$ is continuous and monotone non-increasing. The intervals where it is constant correspond to jumps of the function $\eta(\cdot) r(\cdot, \vphi)$.

   %Recall that $\tilde\al(t)$ is given by \eqref{tilde al};
  %Define the function
%$$
%\tilde x(r,\vphi) = \s(\tilde t(r,\vphi), \vphi)\, r + \tilde\al(\tilde t(r, \vphi)).
     %$$
For each $\vphi$ and $t \in [0,\, 1]$ consider the planar cone $\tilde K_t(\vphi)$ in the half-plane $\Pi_\vphi$  given by the inequalities
\beq\label{line}
x \ge \s(t,\vphi)\, r + \tilde\al(t), \qquad r \ge 0.
\eeq
The cone $\tilde K_t(\vphi)$ is bounded below by the $x$-axis and above by the ray with the origin $\tilde B_t := (\tilde\al(t), \, 0)$  through the point $\tilde M_{t,\vphi} := (\s(t,\vphi)\, \tilde r(t,\vphi) + \tilde\al(t),\, \tilde r(t,\vphi))$, and its vertex is at $\tilde B_t$. Note that for $t = 0$ one has $\tilde M_{0,\vphi} = (\s(0,\vphi)\, r(0,\vphi), \ r(0,\vphi) ) = M_\vphi.$

The union over all $t$ of these planar cones is the 3-dimensional convex cone $\tilde K_t := \cup_\vphi \tilde K_t(\vphi)$ with the vertex at $\tilde B_t$, which is actually the translation of $K_t$ along the $x$-axis by $\tilde\al(t) - \al (t) = \int_0^t (\eta(\tau) - 1)\, d\al(\tau)$. One has, in particular, $\tilde B_0 = O$,\, $\tilde K := \tilde K_0 = K$, and $\tilde K_0(\vphi) = K_0(\vphi).$ Denote $\tilde B := \tilde B_1$. Condition (b) in Definition \ref{o admissible} means that $\tilde B$, and therefore all the points $\tilde B_t$,\, $0 \le t \le 1,$ lie to the left of $B'$.

Define the new planar convex body $\tilde C_\vphi$ by
$$
\tilde C_\vphi = (\cap_{0 \le t \le 1} \tilde K_t(\vphi)) \cap \rm{Conv}(C_\vphi, \{O\})
$$
and the corresponding 3-dimensional convex body $\tilde C$ by
\beqo\label{tilde C}
\tilde C = \cup_\vphi \tilde C_\vphi = \Big( \cap_{0 \le t \le 1} \tilde K_t \Big) \cap {\rm Conv}(C, \{O\}).
\eeqo
Note that it does not follow automatically from these definitions that all cones $\tilde K_t(\vphi)$ are circumscribed about $\tilde C_\vphi$ or that the 3-dimensional cones $\tilde K_t$ are circumscribed about $\tilde C$. This is justified in the following Lemma \ref{l4}.

$\tilde C_\vphi$ is bounded below by a segment of the positive $x$-semiaxis and above by a concave curve with the endpoints at $\tilde B$ and $B'.$ This curve can also be defined as $\tilde C_\vphi \cap \Pi_\vphi$.

Similarly to what was done above for $C_\vphi$, we draw the ray of support from $O$ in the half-plane $\Pi_\vphi$ to this curve, denote by $\pl_- \tilde C_\vphi$ the part of the curve between $\tilde B$ and the closest to $O$ point of intersection of the ray with the curve (excluding this point), and denote by $\pl_+ \tilde C_\vphi$ the complementary part of the curve between this point of intersection and $B'$. From each point $\tilde B_t$,\, $0 \le t \le 1$, draw the ray of support to this curve and denote by $\tilde\s_{t,\vphi}$ the inverse slope of the ray and by $H_{\tilde C}(t,\vphi)$ the outer normal to $\tilde C$ at the closest to $\tilde B_t$ point of intersection of this ray with the curve.

\begin{lemma}\label{l4}
(a) The ray of support in $\Pi_\vphi$ from $O$ to the curve $\tilde C_\vphi \cap \Pi_\vphi$ is $x = \s(0,\vphi)\, r, \ r \ge 0$. The closest to $O$ point of intersection of the ray with the curve is $M_\vphi$, and
$$
\pl_+ \tilde C_\vphi = \pl_+ C_\vphi.
$$

(b) The ray of support from $\tilde B_t$ to the curve is $x = \s(t,\vphi)\, r + \tilde\al(t), \ r \ge 0$,  and therefore,
\beq\label{sig}
\tilde\s_{t,\vphi} = \s_{t,\vphi}.
\eeq
Additionally, $\tilde M_{t,\vphi}$ is the closest to $\tilde B_t$ point of intersection of the ray with the curve.

(c) At each point $(t,\vphi) \in [0,\, 1] \times [0,\, 2\pi)'$,\, $H_{\tilde C}(t,\vphi)$ exists and
$$
H_{\tilde C}(t,\vphi) = H_{C}(t,\vphi).
$$

(d) ${\rm Conv}(\tilde C_\vphi, \{O\}) = {\rm Conv}(C_\vphi, \{O\})$.
\end{lemma}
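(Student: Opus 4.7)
The plan is to derive Lemma \ref{l4} from Lemma \ref{l ineq2} together with a monotonicity observation about linear functions on the convex curve $\pl C \cap \Pi_\vphi$. The two ingredients I will use are: (i) $\tilde M_{t,\vphi} \in \tilde K_{t'}(\vphi)$ for all $t, t'$, and (ii) $\pl_+ C_\vphi \subset \cap_t \tilde K_t(\vphi)$. Ingredient (i) is immediate: the left inequality of Lemma \ref{l ineq2}, $\tilde r(t,\vphi)[\s(t',\vphi) - \s(t,\vphi)] \le \tilde\al(t) - \tilde\al(t')$, rearranges to $\s(t,\vphi)\tilde r(t,\vphi) + \tilde\al(t) \ge \s(t',\vphi)\tilde r(t,\vphi) + \tilde\al(t')$, which says exactly this, so $\tilde M_{t,\vphi} \in \cap_{t'}\tilde K_{t'}(\vphi)$, and by construction it also lies on $\pl\tilde K_t(\vphi)$. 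For (ii): the linear function $f_t(p) := p_x - \s(t,\vphi) p_r$ attains its minimum over $\pl C \cap \Pi_\vphi$ at the contact point $M_{t,\vphi} \in \pl_- C_\vphi$ with value $\al(t)$, and by convexity this minimum is unique and $f_t$ increases monotonically away from $M_{t,\vphi}$ along the curve in either direction. Hence on the sub-arc $\pl_+ C_\vphi$ (which does not contain $M_{t,\vphi}$ and runs from $M_\vphi$ to $B'$), the minimum of $f_t$ is attained at the endpoint $M_\vphi$, equal to $f_t(M_\vphi) = r(0,\vphi)[\s(0,\vphi) - \s(t,\vphi)]$. The right inequality of Lemma \ref{l ineq2} with $(t_1,t_2) = (0,t)$ yields $\tilde\al(t) \le r(0,\vphi)[\s(0,\vphi) - \s(t,\vphi)]$, so $f_t(p) \ge \tilde\al(t)$ for every $p \in \pl_+ C_\vphi$ and every $t$; this is (ii).

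Part (b) follows by combining (i) and (ii). Admissibility gives $\tilde r(t,\vphi) \in [0, r(0,\vphi)]$, so the right-boundary point $q_t := (x_+(\tilde r(t,\vphi),\vphi), \tilde r(t,\vphi))$ lies on $\pl_+ C_\vphi$, where $x_+(r,\vphi)$ denotes the $x$-coordinate of the right boundary of $C_\vphi$ at height $r$. Ingredient (ii) applied at $q_t$ gives $(q_t)_x \ge (\tilde M_{t,\vphi})_x$, while Lemma \ref{l ineq2} at $(0,t)$ yields $(\tilde M_{t,\vphi})_x \ge \s(0,\vphi)\tilde r(t,\vphi)$; together these place $\tilde M_{t,\vphi}$ inside $\text{Conv}(C_\vphi,\{O\})$ at height $\tilde r(t,\vphi)$, so by (i) also $\tilde M_{t,\vphi} \in \tilde C_\vphi$. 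Since $\tilde M_{t,\vphi}$ further lies on $\pl\tilde K_t(\vphi)$, the line $x = \s(t,\vphi) r + \tilde\al(t)$ is a support line to $\tilde C_\vphi$ tangent at $\tilde M_{t,\vphi}$, yielding \eqref{sig}.

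Part (a) is the specialization $t = 0$: $\tilde B_0 = O$, $\tilde M_{0,\vphi} = M_\vphi$, and the support line becomes $x = \s(0,\vphi) r$, a ray from $O$ tangent at $M_\vphi$. The identity $\pl_+\tilde C_\vphi = \pl_+ C_\vphi$ follows from (ii) combined with $\pl_+ C_\vphi \subset \pl\,\text{Conv}(C_\vphi,\{O\})$: each $p \in \pl_+ C_\vphi$ lies on $\pl\,\text{Conv}$ and inside $\cap_t \tilde K_t(\vphi)$, hence on $\pl \tilde C_\vphi$, and no other boundary piece of $\tilde C_\vphi$ can interpose between $M_\vphi$ and $B'$. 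Part (c) is then immediate from Remark \ref{z normal}: the outer normal $H_{\tilde C}(t,\vphi)$ depends only on $\s(t,\vphi)$ and $\pl\s/\pl\vphi$ via the derivation leading to \eqref{key formula}, so \eqref{sig} forces $H_{\tilde C} = H_C$. Part (d) is immediate since $\tilde C_\vphi \subseteq \text{Conv}(C_\vphi,\{O\})$ gives one inclusion, and conversely both convex hulls are bounded by the same three pieces: the segments $OM_\vphi$, $OB'$ and the common arc $\pl_+\tilde C_\vphi = \pl_+ C_\vphi$. The substantive step, and main obstacle, is ingredient (ii), identifying the minimum of $f_t$ on $\pl_+ C_\vphi$ via the monotonicity of a linear function on the convex curve $\pl C \cap \Pi_\vphi$.
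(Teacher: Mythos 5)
There is a genuine gap, and it sits exactly where you locate the ``main obstacle'': your proof of ingredient (ii). The assertion that $f_t(p)=p_x-\s(t,\vphi)p_r$ ``increases monotonically away from $M_{t,\vphi}$ along the curve in either direction'', and hence is minimized on $\pl_+ C_\vphi$ at the endpoint $M_\vphi$, is false. A linear functional is only unimodal along the closed boundary of $C_\vphi$: it increases from its minimizer $M_{t,\vphi}$ up to its maximizer and then \emph{decreases} back down, and the arc $\pl_+ C_\vphi$ continues past the maximizer all the way to $B'$. So all you can conclude is that $\min_{\pl_+ C_\vphi} f_t = \min\big(f_t(M_\vphi),\,f_t(B')\big)$, and nothing in hypotheses (i), (ii)$'$ forces $f_t(M_\vphi)\le f_t(B')$: we have $f_t(M_\vphi)=r(0,\vphi)\,[\s(0,\vphi)-\s(t,\vphi)]$ while $f_t(B')=|OB'|$, and since (i), (ii)$'$ constrain only $\pl_- C$, the body may overhang $B'$ (e.g.\ a cross-section whose left arc rises from $B$ with near-vertical tangent to a tangency point $M_\vphi$ whose $x$-coordinate exceeds $x_{B'}$), giving $f_t(M_\vphi)>|OB'|$ for $t$ near $1$. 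Consequently your chain $f_t(p)\ge f_t(M_\vphi)\ge\tilde\al(t)$ breaks down on the portion of $\pl_+ C_\vphi$ near $B'$; the $B'$ endpoint must be handled by condition (b) of Definition \ref{o admissible}, namely $\tilde\al(t)\le\tilde\al(1)<|OB'|=f_t(B')$, which your argument never invokes. This omission is not cosmetic: if $\tilde\al(1)\ge |OB'|$, the cone $\tilde K_t(\vphi)$ for $t$ near $1$ has its vertex at or beyond $B'$ and cuts $B'$ off from $\tilde C_\vphi$, so $\pl_+\tilde C_\vphi\ne\pl_+ C_\vphi$ and the lemma fails; hence any correct proof must use (b) somewhere, as the paper's does (it places $\tilde M_{t,\vphi}$ in the triangle $OM_\vphi B'$, using Lemma \ref{l ineq2} for the side $OM_\vphi$ and $\tilde\al(t)<|OB'|$ for the side through $B'$).

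Two smaller omissions: the lemma also asserts that $M_\vphi$ is the \emph{closest} to $O$ point of intersection of the support ray with the new curve, and that $\tilde M_{t,\vphi}$ is the closest such point to $\tilde B_t$; since the contact can be a segment and $t\mapsto\tilde r(t,\vphi)$ can jump, this needs the right-semicontinuity argument ($\lim_{t\to 0^+}\tilde r(t,\vphi)=r(0,\vphi)$) given in the paper, which your proposal skips. Likewise in (c) the existence of $H_{\tilde C}(t,\vphi)$ on $[0,\,1]\times[0,\,2\pi)'$ is not ``immediate'' from \eqref{sig}; the clean argument is that $\tilde K_t$ is a translate of $K_t$, so regularity of the ray $\pl\tilde K_t\cap\Pi_\vphi$ and the corresponding normal transfer from $K_t$ (Remark \ref{z normal}). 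Your treatment of ingredient (i), of the placement of $\tilde M_{t,\vphi}$ in ${\rm Conv}(C_\vphi,\{O\})$, and of part (d) is sound and close in spirit to the paper.
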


\begin{proof}
Fix $\vphi$. One easily derives from Lemma \ref{l ineq2} that each point $\tilde M_{t,\vphi} = (\s(t,\vphi)\, \tilde r(t,\vphi) + \tilde\al(t),\, \tilde r(t,\vphi))$ lies to the right of all rays of the kind $x = \s(t',\vphi)\, r + \tilde\al(t'),\, r \ge 0$\, $(0 \le t' \le 1)$ and therefore belongs to the intersection of the cones $\cap_{0 \le t \le 1} \tilde K_t(\vphi)$.

Further, for any $0 \le t \le 1$, both the points $B'$ and $M_\vphi$ lie to the right of the ray $x = \s(t,\vphi)\, r + \tilde\al(t), \ r \ge 0$ containing $\tilde M_{t,\vphi}$, and $\tilde M_{t,\vphi}$ lies to the right of the line $OM_\vphi$, and therefore, $\tilde M_{t,\vphi}$ belongs to  the triangle $OM_\vphi B'$. Since this triangle is contained in $\text{Conv} (C_\vphi, \{O\})$, we have
$$
\tilde M_{t,\vphi} \in \text{Conv} (C_\vphi, \{O\}).
$$

It follows that the point $\tilde M_{t,\vphi} $ lies in $\tilde C_\vphi = \big( \cap_{0 \le t \le 1} \tilde K_t(\vphi) \big) \cap \big( \text{Conv} (C_\vphi, O) \big)$. Since it also lays on the ray $x = \s(t,\vphi)\, r + \tilde\al(t), \ r \ge 0$, we conclude that this ray is the ray of support from $\tilde B_t$ to $\tilde C_\vphi$, and therefore, $\tilde\s_{t,\vphi} = \s_{t,\vphi}.$ Thus, formula \eqref{sig} in (b) is proved.

In particular, the ray $x = \s(0,\vphi)\, r, \ r \ge 0$ is the ray of support from $O$ to the curve $\tilde C_\vphi \cap \Pi_\vphi$, and the point $\tilde M_{0,\vphi} = M_\vphi$ lies in the intersection of this ray with the curve. Further, the $r$-coordinate of the closest to $O$ point of the intersection is smaller than or equal to the $r$-coordinate of $M_\vphi$ and  greater than or equal to the $r$-coordinate of $\tilde M_{t,\vphi}$ for all $t > 0$ (these coordinates are equal, correspondingly, to $r(0,\vphi)$ and $\tilde r(t,\vphi)$). Since the function $t \mapsto \tilde r(t,\vphi)$ is right semicontinuous, we have $\lim_{t\to 0^+} \tilde r(t,\vphi) = \tilde r(0,\vphi) = r(0,\vphi)$, and therefore, the closest to $O$ point is $M_\vphi$.

Exactly the same argument can be used to prove that $\tilde M_{t,\vphi}$ is the closest to $\tilde B_t$ point of intersection of the ray $x = \s(t,\vphi)\, r + \tilde\al(t), \ r \ge 0$ with the curve. Thus, the proof of (b) is completed.

The curve $\pl_+ C_\vphi$ lies in the intersection of cones $\cap_{0 \le t \le 1} \tilde K_t(\vphi)$, and therefore, in $\tilde C_\vphi = (\cap_{0 \le t \le 1} \tilde K_t(\vphi)) \cap \rm{Conv}(C_\vphi, \{O\})$. Since $\pl_+ C_\vphi$ lies in the boundary of $\rm{Conv}(C_\vphi, \{O\})$, it also lies in the boundary of $\tilde C_\vphi$, and additionally, is bounded by the points $M_\vphi$ and $B'$. Since $\pl_+ \tilde C_\vphi$ is also bounded by $M_\vphi$ and $B'$, we have $\pl_+ \tilde C_\vphi = \pl_+ C_\vphi$, and (a) is proved.

Remark \ref{z normal} is applicable also to $\tilde C$, that is, (a) $H_{\tilde C}(t,\vphi)$ exists {\it iff} the points of the ray $\pl\tilde K_t \cap \Pi_\vphi$ are regular points of the cone $\tilde K_t$, and (b) $H_{\tilde C}(t,\vphi)$ coincides with the outer normal to $\tilde K_t$ at any point of the ray $\pl\tilde K_t \cap \Pi_\vphi$. Since $\tilde K_t$ is a translation of $K_t$ along the $x$-ray and $\Pi_\vphi$ is invariant with respect to this translation, the points of the ray $\pl \tilde K_t \cap \Pi_\vphi$ are regular points of $\tilde K_t$ {\it iff} the points of the ray $\pl K_t \cap \Pi_\vphi$ are regular points of $K_t$, and in the case of regularity, the outer normal to $\tilde K_t$ at a point of $\pl \tilde K_t \cap \Pi_\vphi$ coincides with the outer normal to $K_t$ at a point of $\pl K_t \cap \Pi_\vphi$. It follows that $H_{\tilde C}(t,\vphi)$ is defined on $[0,\, 1] \times [0,\, 2\pi)'$ and coincides with $H_{C}(t,\vphi)$, and (c) is proved.

Note that the set Conv$(C_\vphi, \{O\})$ is bounded by the line segments $OB'$ and $OM_\vphi$ and by the curve $\pl_+ C_\vphi$, and therefore, coincides with Conv$(\pl_+ C_\vphi, \{O\})$. Using that $\pl_+ C_\vphi \subset \tilde C_\vphi$, we obtain
$$
\text{Conv}(C_\vphi, \{O\}) = \text{Conv}(\pl_+ C_\vphi, \{O\}) \subset \text{Conv}(\tilde C_\vphi, \{O\}).
$$
On the other hand, since $\tilde C_\vphi \subset \rm{Conv}(C_\vphi, \{O\})$, we have
$$
\text{Conv}(\tilde C_\vphi, \{O\}) \subset \text{Conv}\big(\text{Conv}(C_\vphi, \{O\}), \{O\} \big) = \text{Conv}(C_\vphi, \{O\}).
$$
Thus, (d) is proved.
\end{proof}

 The statement (b) of the following lemma \ref{l conditions} is, in a sense, inverse to Lemma \ref{l curvature}.

\begin{lemma}\label{l conditions}
(a) The ray $x = \s(0,\vphi)\, r, \ r \ge 0$ is tangent to $\tilde C_\vphi$.

(b) For any $0 < \tau < 1$ there exists $\tilde k = \tilde k(\tau)$ such that for any $\vphi$, the part of the curve $\pl_- \tilde C_\vphi$ between the points $\tilde M_{t,\vphi}$ and $M_\vphi$ is of class $C^1$ and has curvature $\le \tilde k$.
\end{lemma}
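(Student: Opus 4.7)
The plan is to prove (a) using continuity of $\s$ together with the invariance $\tilde\s = \s$ established in Lemma \ref{l4}(b), and to prove (b) by transferring the original curvature bound of condition (ii)$'$ across the change of coordinates $r \mapsto \tilde r = \eta r$, using admissibility of $\eta$ to control the distortion.

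For (a), Lemma \ref{l4}(a) tells us that $x = \s(0,\vphi)\, r,\ r \ge 0$ is the ray of support from $O$ to $\tilde C_\vphi$ meeting $\pl\tilde C_\vphi$ at $M_\vphi$, and that $\pl_+\tilde C_\vphi = \pl_+ C_\vphi$. By condition (i) of Theorem \ref{t0}, this ray is tangent to $\pl_+ C_\vphi$ at $M_\vphi$. Approaching $M_\vphi$ along $\pl_-\tilde C_\vphi$ through $\tilde M_{t,\vphi} = (\s(t,\vphi)\,\tilde r(t,\vphi) + \tilde\al(t),\ \tilde r(t,\vphi))$, continuity of $\s$ (Lemma \ref{l sigma}(a)) together with $\tilde r(0,\vphi) = r(0,\vphi)$ and $\tilde\al(0) = 0$ shows that $\tilde M_{t,\vphi} \to M_\vphi$ as $t \to 0^+$ and that the tangent lines $x = \s(t,\vphi)\, r + \tilde\al(t)$ converge to the ray $x = \s(0,\vphi)\, r$. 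Hence the two one-sided tangent directions at $M_\vphi$ coincide with the ray, so the ray is tangent to $\tilde C_\vphi$.

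For (b), by Lemma \ref{l4}(b) the tangent angle $\tilde\psi_t = \mathrm{arccot}\,\s(t,\vphi)$ to $\pl_-\tilde C_\vphi$ at $\tilde M_{t,\vphi}$ equals the corresponding angle $\psi_t$ for $\pl_-C_\vphi$ at $M_{t,\vphi}$, so the estimate derived in the proof of Lemma \ref{l curvature} from condition (ii)$'$ gives, with $c = \sup|\s|$ and $r_i = r(t_i,\vphi)$,
\[
|\tilde\psi_{t_2} - \tilde\psi_{t_1}| \le k(\tau)\,\sqrt{1+c^2}\,(r_1 - r_2) \quad \text{for } 0 \le t_1 < t_2 \le \tau.
\]
It remains to dominate $r_1 - r_2$ by the arclength $\tilde L(t_1,t_2)$ of $\pl_-\tilde C_\vphi$ between $\tilde M_{t_1,\vphi}$ and $\tilde M_{t_2,\vphi}$, which is at least $\tilde r_1 - \tilde r_2$.

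Writing $\tilde r_1 - \tilde r_2 = \eta(t_1)(r_1 - r_2) + (\eta(t_1) - \eta(t_2))\, r_2$ and estimating $|\eta(t_1)-\eta(t_2)| \le \max_{[0,\tau]}|\eta'|\cdot (t_2-t_1)$, $r_2 \le \mathrm{diam}(C)$, and the admissibility bound $t_2 - t_1 \le (\tilde r_1 - \tilde r_2)/\tilde\gam(\tau)$ from Definition \ref{o admissible}(a), one readily obtains $r_1 - r_2 \le C(\tau)\,(\tilde r_1 - \tilde r_2)$ for an explicit constant $C(\tau) > 0$. Combining this with the previous display and $\tilde r_1 - \tilde r_2 \le \tilde L(t_1,t_2)$ yields $|\tilde\psi_{t_2} - \tilde\psi_{t_1}| \le \tilde k(\tau)\,\tilde L(t_1,t_2)$ with $\tilde k(\tau) = k(\tau)\,\sqrt{1+c^2}\,C(\tau)$, which is the required curvature bound. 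The $C^1$ regularity then follows from continuity of $\tilde\psi_t$ in $t$ (since $\s$ is continuous). The main subtlety, which I expect to be the principal obstacle to a fully rigorous proof, is that $\tilde r(\cdot,\vphi)$ may have downward jumps at values of $t$ corresponding to straight segments of $\pl_-C_\vphi$; one has to verify that these jumps produce straight segments of $\pl_-\tilde C_\vphi$ along which the tangent direction is constant, so that they contribute only to arclength and not to curvature, and hence the curve stays $C^1$ globally.
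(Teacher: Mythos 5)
Your proof of part (a) is essentially the paper's own argument (tangency from the $\pl_-$ side via convergence of the support lines as $t \to 0^+$, using right-continuity of $\tilde r(\cdot,\vphi)$, combined with condition (i) on the $\pl_+$ side and $\pl_+\tilde C_\vphi = \pl_+C_\vphi$), so there is nothing to add there. Part (b) is correct but follows a genuinely different route. You transfer the original bound $k(\tau)$ of condition (ii)$'$ to the new curve, using $\tilde\s_{t,\vphi}=\s(t,\vphi)$ from Lemma \ref{l4}(b) and the comparison $r(t_1,\vphi)-r(t_2,\vphi)\le C(\tau)\,\bigl(\tilde r(t_1,\vphi)-\tilde r(t_2,\vphi)\bigr)$, which does follow from your decomposition together with $\min\eta>0$, $\|\eta'\|_\infty<\infty$, $r\le\mathrm{diam}(C)$ and Definition \ref{o admissible}(a). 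The paper's proof of (b) never invokes condition (ii)$'$ at this stage: it inverts the argument of Lemma \ref{l curvature} directly in the new variables, estimating $\psi_2-\psi_1\le\cot\psi_1-\cot\psi_2=\s(t_1,\vphi)-\s(t_2,\vphi)\le\bigl(\tilde\al(t_2)-\tilde\al(t_1)\bigr)/\tilde r(t_2,\vphi)$ by the support-line (concavity) inequality of Lemma \ref{l ineq2}, then $\tilde\al(t_2)-\tilde\al(t_1)\le\mathrm{const}\cdot(t_2-t_1)$ and, by admissibility, $t_2-t_1\le\bigl(\tilde r(t_1,\vphi)-\tilde r(t_2,\vphi)\bigr)/\tilde\gam(\tau)\le(\text{arc length})/\tilde\gam(\tau)$, arriving at $\tilde k(\tau)=\max_{t\in[0,1]}\al'(t)\,/\,\bigl(\tilde\gam(\tau)\,\tilde R_\tau\bigr)$ with $\tilde R_\tau=\min_\vphi\tilde r(\tau,\vphi)$. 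Your route reuses the already established regularity of $\pl_-C_\vphi$, at the price of a constant involving $k(\tau)$, $\sup|\s|$ and $\eta$; the paper's route is self-contained in the tilded quantities (only admissibility and the support-line inequalities enter) but needs the lower bound $\tilde R_\tau>0$ instead. The subtlety you flag --- jumps of $\tilde r(\cdot,\vphi)$ producing straight segments on which the support direction is constant, and the passage from a Lipschitz bound at the points $\tilde M_{t,\vphi}$ to $C^1$ regularity of the whole arc --- is treated at exactly the same informal level in the paper (``the angle of inclination of the support line is a continuous function of the arc length''), so acknowledging rather than fully elaborating it is not a gap relative to the paper's own proof.
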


\begin{proof}
The proof of (b) is essentially an inversion of the argument of Lemma \ref{l curvature}. We use the same Fig.~\ref{fig curv} with a slight correction of notation: $M_{t_i,\vphi}$ and $\al(t_i)$,\, $i = 1,\,2$ in the figure should be replaced, respectively, by $\tilde M_{t_i,\vphi}$ and $\tilde\al(t_i)$.

Take $0 \le t_1 < t_2 \le \tau$. We use the shorthand notation $r_1 := \tilde r(t_1,\vphi), \ r_2 := \tilde r(t_2,\vphi), \ \psi_1 := \text{arccot}(\s(t_1,\vphi)), \ \psi_2 := \text{arccot}(\s(t_2,\vphi))$. One has $t_1 < t_2$, therefore $r_1 > r_2$ and $\psi_1 < \psi_2$.

The length of the part of the curve between the points $\tilde M_{t_1,\vphi}$ and $\tilde M_{t_2,\vphi}$ (to be referred to as {\it the curve} later on in the proof) is greater than or equal to $r_1 - r_2$, and since the function $\tilde r$ is admissible, for $\tilde\gam = \tilde\gam(\tau)$ one has $r_1 - r_2 \ge \tilde\gam (t_2 - t_1)$.

Draw the line through the point $\tilde M_{t_2,\vphi}$ parallel to the tangent line at $\tilde M_{t_2,\vphi}$, and let $\hat\al$ be the $x$-coordinate of intersection of this line with the $x$-axis. Since the curve is concave, we have $\hat\al \ge \tilde\al(t_1)$. Thus, we have
$$
\tilde\al(t_2) - \tilde\al(t_1) \ge \tilde\al(t_2) - \hat\al = r_2\, \big( \s(t_1,\vphi) - \s(t_2,\vphi) \big).
$$

Note that $\s(t_1,\vphi) - \s(t_2,\vphi) = \cot\psi_1 - \cot\psi_2 > \psi_2 - \psi_1$. Introducing the value $\tilde R_\tau := \min_\vphi \tilde r(\tau,\vphi)$ and using that $r_2 \ge \tilde R_\tau$, we get
$$
\text{\big(\!\! length of the curve\!\! \big)} \ge r_1 - r_2 \ge \tilde\gam (t_2 - t_1) \ge \tilde\gam\, \frac{\tilde\al(t_2) - \tilde\al(t_1)}{\max_{t \in [0,1]} \al'(t)} \ge \frac{1}{\tilde k}\, (\psi_2 - \psi_1),
$$
where
$$
\tilde k = \tilde k(\tau) = \frac{\max_{t \in [0,1]} \al'(t)}{\tilde\gam\, \tilde R_\tau}.
$$
Thus, the angle of inclination of the support line is a continuous function of the arc length. This implies that the curve is of class $C^1$. Claim (b) of the lemma is proved.

Since the functions $\s(\cdot, \vphi)$ and $\tilde t(\cdot, \vphi)$ are continuous, the slope $1/\s(\tilde t(r, \vphi), \vphi)$ of the curve $\pl_- \tilde C_\vphi$ converges to $1/\s(0,\vphi)$ when the point of the curve approaches $M_\vphi$. It follows that the ray $x = \s(0,\vphi)\, r, \ r \ge 0$ is tangent to $\pl_- \tilde C_\vphi$ at the endpoint $M_\vphi$. On the other hand, according to condition (i) of Theorem \ref{t0}$'$, this ray is also tangent to $\pl_+ C_\vphi$ at the endpoint $M_\vphi$. Taking into account that $\pl_+ C_\vphi = \pl_+ \tilde C_\vphi$, one concludes that the ray is tangent to $\tilde C_\vphi$. Thus, claim (a) of the lemma is also proved.
\end{proof}

Denote $\pl_+ \tilde C = \cup_\vphi \pl_+ \tilde C_\vphi = \pl\tilde C \cap \pl\big( \text{Conv} (C \cup \{O\}) \big)$. It follows from claim (d) of Lemma \ref{l4} that $\pl_+ \tilde C = \pl_+ C$, and therefore,
\beq\label{pl+}
\pl_+ C \subset \pl\tilde C.
\eeq

Since $\tilde K$ coincides with $K$, the sets $\Om_-(K)$ and $\Om_+(K)$, defined by \eqref{Om-} and \eqref{Om+}, coincide with $\Om_-(\tilde K)$ and $\Om_+(\tilde K)$, respectively. It follows that $\NNN_{\tilde C}^{-1}(\Om_+) = \pl'_+ \tilde C = \pl'_+ C$ and $\NNN_{\tilde C}^{-1}(\Om_+) = \pl'_+ \tilde C$ and $\NNN_{\tilde C}\big\rfloor_{\pl'_+ \tilde C} = \NNN_{C}\big\rfloor_{\pl'_+ C}$.

Claims (b), (c) of Lemma \ref{l4} and Lemma \ref{l conditions} mean that the body $\tilde C$ satisfies the conditions of Theorem \ref{t0}$'$, the corresponding auxiliary functions $\tilde\s(t,\vphi)$ and $H_{\tilde C}(t,\vphi)$ coincide with $\s(t,\vphi)$ and $H_{C}(t,\vphi)$, respectively, and the function $r(t,\vphi)$ should be repaced with $\tilde r(t,\vphi) = \eta(t) r(t,\vphi)$. Therefore Lemma \ref{l integral} is also applicable to the body $\tilde C$.

For any Borel set $D \subset \Om_+$ we have
\beq\label{area+}
\nu_{\tilde C}(D) =  |\NNN_{\tilde C}^{-1}(D)| =  |\NNN_{C}^{-1}(D)| = \nu_{C}(D).
\eeq
For a Borel set $D \subset \Om_-$ we get
\beq\label{area-}
\nu_{\tilde C}(D)  = \int\!\!\!\int_{H_C^{-1}(D)} \sqrt{1 + \s^2(t,\vphi) + \Big(\frac{\pl\s}{\pl\vphi}(t,\vphi)\Big)^2}\ \frac{1}{2}\, d\big[ -\eta^2(t) r^2(t,\vphi) \big] d\vphi.
\eeq

Now prove Theorem \ref{t0}$'$. Fix $0 < T < 1$ and take $c = c(T)$ as explained in Lemma \ref{l eta admissible}. Choose a $C^1$ function $\theta : [0,\, 1] \to \RR$ such that (i) $\theta(t) = 0$ for $t \in [T,\, 1]$, (ii) $|\theta(t)| < 1$ for all $t$, and (iii) $\big| \frac{d}{dt} \big[ \frac 12 \ln(1 \pm \theta(t)) \big] \big| \le c(T)$ for all $0 \le t \le T$. It is straightforward to check that for any $s \in [-1,\, 1]$ the function $\eta(\cdot, s) = \sqrt{1 + s\theta(\cdot)}$ satisfies the conditions of Lemma \ref{l eta admissible}, and therefore is admissible.

Consider the family of functions $\eta(\cdot, s) = \sqrt{1 + s\, \theta(\cdot)}$,\, $s \in [-1,\, 1]$ and the corresponding family of convex bodies $C(s) = C^{(\theta)}(s)$. One obviously has $C(0) = C$; thus, statement (a) of the theorem is true.

Formula \eqref{pl+} remains valid when $\tilde C$ is replaced with $C(s)$; thus, statement (b) of the theorem is also true.

According to formulas \eqref{area+} and \eqref{area-}, for a Borel set $D \subset \Om_+$ we have $\nu_{C(s)}(D) = \nu_{C}(D)$, and for a Borel set $D \subset \Om_-$,
 $$
\nu_{C(s)}(D)  = \int \!\!\!\int_{H_C^{-1}(D)} \sqrt{1 + \s^2(t,\vphi) + \Big(\frac{\pl\s}{\pl\vphi}(t,\vphi)\Big)^2}\, \frac{1}{2}\, d\big[ -(1 + s\, \theta(t)) r^2(t,\vphi) \big] d\vphi.
$$
Using these relations, one easily derives that for any Borel set $D$,
$$
\nu_{C(s)}(D) = \nu_C(D) + s\, \big( \nu_{C(1)}(D) - \nu_{C}(D) \big),
$$
and so, statement (c) of the theorem is true.

Further, the signed measure that serves as a director vector, $\nu_{C^{(\theta)}(1)} - \nu_C$, satisfies the relations $(\nu_{C^{(\theta)}(1)} - \nu_C)(D)  = 0$, if $D \subset \Om_+$, and
 $$
(\nu_{C^{(\theta)}(1)} - \nu_C)(D)  = \int \!\!\!\int_{H_C^{-1}(D)} \sqrt{1 + \s^2(t,\vphi) + \Big(\frac{\pl\s}{\pl\vphi}(t,\vphi)\Big)^2}\, \frac{1}{2}\, d\big[ -\theta(t)\, r^2(t,\vphi) \big]  d\vphi,
$$
if $D \subset \Om_-$.

Let $\Theta$ be the set of $C^1$ functions $\theta$ satisfying (i), (ii), and (iii). The union of the 1-dimensional subspaces spanned by all the director vectors corresponding to $\theta \in \Theta$ contains all signed measures induced by $C^1$ function $\theta$ equal to 0 on $[T,\, 1]$, that is, the measures $\nu = \nu^{(\theta)}$ defined by
$$
\nu^{(\theta)}(D)  = \left\{
\begin{array}{ll}
\ \ 0, & \text{if } \ D \subset \Om_+;\\
\int \!\!\int_{H_C^{-1}(D)} \, \frac{1}{2}\sqrt{1 + \s^2(t,\vphi) + \Big(\frac{\pl\s}{\pl\vphi}(t,\vphi)\Big)^2} \times & \\
\times d\Big[ -\theta(t)\, r^2(t,\vphi) \Big]  d\vphi, & \text{if } \ D \subset \Om_-.
\end{array}
\right.
$$
All such measures form an infinite-dimensional linear space. Thus, statement (d) of the theorem is proved.

\section*{Acknowledgements}

This work was supported by Portuguese funds through CIDMA - Center for Research and Development in Mathematics and Applications, and FCT - Portuguese Foundation for Science and Technology, within the project UID/MAT/04106/2013.

\end{document}